\newcommand{\be}{\begin{equation}}
\newcommand{\en}{\end{equation}}
\newcommand{\bea}{\begin{eqnarray}}
\newcommand{\ena}{\end{eqnarray}}
\newcommand{\beano}{\begin{eqnarray*}}
\newcommand{\enano}{\end{eqnarray*}}
\newcommand{\bee}{\begin{enumerate}}
\newcommand{\ene}{\end{enumerate}}
\newcommand{\mc}{\mathcal}
\newcommand{\R}{\mathbb{R}}
\newcommand{\N}{\mathbb{N}}
\newcommand{\Z}{\mathbb{Z}}
\newcommand{\B}{{\mc B}}
\newcommand{\D}{{\mc D}}
\newcommand{\cM}{{\mathcal M}}
\newtheorem{defn}{Definition}[section]
\newtheorem{thm}[defn]{Theorem}
\newtheorem{prop}[defn]{Proposition}
\newtheorem{lemma}[defn]{Lemma}
\newtheorem{cor}[defn]{Corollary}
\newtheorem{example}[defn]{Example}
\newtheorem{rem}[defn]{Remark}
\def\x{\relax\ifmmode {\mbox{*}}\else*\fi}
\newcommand{\beex}{\begin{example}$\!\!${\bf }$\;$\rm }
\newcommand{\enex}{ \end{example}}
\newcommand{\berem}{\begin{rem}$\!\!${\bf }$\;$\rm }
\newcommand{\enrem}{ \end{rem}}
\newcommand{\bedefi}{\begin{defn}$\!\!${\bf }$\;$\rm }
\newcommand{\findefi}{\end{defn}}
\newcommand{\ip}[2]{\langle {#1}|{#2}\rangle}
\def\H{{\mathcal H}}
\def\K{{\mathcal K}}
\def\J{{\mathcal J}}
\newcommand{\widechec}{\rotatebox[origin=b]{-180}{$\widehat{}$}}
\newcommand{\wch}[1]{\overset{\widechec}{#1}}
\numberwithin{equation}{section}
\begin{document}
\title[Frames and weak frames for unbounded operators]
{Frames and weak frames for unbounded operators}

\author{Giorgia Bellomonte}
\author{Rosario Corso}
\address{Dipartimento di Matematica e Informatica,
Universit\`a degli Studi di Palermo, I-90123 Palermo, Italy}
\email{giorgia.bellomonte@unipa.it}
\email{rosario.corso@studium.unict.it}

\subjclass[2010]{42C15, 47A05, 47A63, 41A65.} \keywords{$A$-frames, weak $A$-frames, atomic systems, reconstruction formulas, unbounded operators.}
\date{\today}

\begin{abstract}In 2012 G\u{a}vru\c{t}a introduced the notions of $K$-frame and of atomic system for a linear bounded operator $K$ in a Hilbert space $\H$, in order to decompose its range $\mathcal{R}(K)$ with a frame-like expansion. In this article we revisit these concepts for an unbounded and densely defined operator $A:\D(A)\to\H$ in two different ways. In one case we consider a non-Bessel sequence where the coefficient sequence  depends continuously on $f\in\D(A)$ with respect to the norm of $\H$. In the other case we consider a Bessel sequence and the coefficient sequence depends continuously on $f\in\D(A)$ with respect to the graph norm of $A$. \end{abstract}

{\maketitle
\section{Introduction} The notion of frame in Hilbert spaces dates backs to 1952 when it was introduced in the pioneeristic paper of J. Duffin and A.C. Schaffer \cite{DuffinSchaeffer}, and was resumed in 1986 by I. Daubechies, A. Grossman and Y. Meyer in \cite{DGM}. This notion is a generalization of that of orthonormal bases. Indeed, let $\H$ be a Hilbert space with inner product $\ip{\cdot}{\cdot}$ and norm $\|\cdot\|$, a frame is a sequence in $\H$ that allows every element of $\H$ to be written as a stable, potentially infinite, linear combination of the elements of the sequence. The uniqueness of the decomposition is lost, in general, and this gives a certain freedom in the choice of the coefficients in the expansion which is in fact a good quality in applications.

 L. G\u{a}vru\c{t}a   introduced in \cite{gavruta} the notion of atomic system for a linear, {\it bounded operator} $K$ defined everywhere on $\H$. This notion generalizes frames and also  {\it atomic systems for  subspaces} in \cite{FW}. More precisely, $\{g_n\}_{n\in\mathbb{N}}\subset\H$ is an atomic system for $K$ if  there exists $\gamma >0$ such that for every $f\in \H$ there exists $a_f=\{a_n(f)\}_{n\in\mathbb{N}}\in \ell^2$, the usual Hilbert space of complex sequences, such that $\|a_f\|\leq \gamma \|f\|$ and
\begin{equation*}
Kf=\sum_{n=1}^\infty a_n(f)g_n.
\end{equation*}
 This notion turns out to be equivalent to that of $K$-frame \cite{gavruta}; i.e. a sequence $\{g_n\}_{n\in\mathbb{N}}\subset\H$ satisfying
\begin{equation}\label{K-frame}
\alpha \|K^*f\|^2\leq \sum_{n=1}^\infty |\ip{f}{g_n}|^2 \leq \beta\|f\|^2, \qquad \forall f\in \H,
\end{equation}
for some constants $\alpha,\beta>0$, where $K^*$ is the adjoint of $K$. The main theorem in \cite{gavruta} states that if $\{g_n\}_{n\in\mathbb{N}}$ is a $K$-frame, then there exists a Bessel sequence $\{h_n\}_{n\in\mathbb{N}}$ in $\H$, i.e.  $\sum_{n=1}^\infty|\ip{f}{h_n}|^2\leq\gamma\|f\|^2$ for all $f\in \H$ and some $\gamma>0$, such that

\begin{equation*}
   Kf=\sum_{n=1}^\infty \ip{f}{h_n}g_n, \qquad \forall f\in \H.
\end{equation*}

 This generalization of frames allows to write every element of $\mathcal{R} (K)$, the range of $K$, which need not be  closed, as a superposition of the elements $\{g_n\}_{n\in\mathbb{N}}$ which do not necessarily belong to $\mathcal{R} (K)$. A question can arise at this point: why develop a theory of $K$-frames since there already exists a well-studied theory of frames that reconstruct the entire space $\H$? The answer is that if in a specific situation we are looking for sequences with some properties, then we may not find any possible frame, but we may find a $K$-frame because this notion is weaker and we could want to decompose just $\mathcal{R}(K)$. 
 
 Let us see a concrete example: let $\H=L^2(\R)$, $\phi \in L^2(\R)$ and consider  the translation system $\{\phi_n(x)\}_{n\in \Z}:=\{\phi(x-cn)\}_{n\in \Z}$ and the Gabor system $G(\phi,a,b)=\{\phi_{m,n}(x)\}_{m,n\in \Z}:=\{e^{2\pi imbx}\phi(x-na)\}$ with $a,b,c>0$. As it is known \cite{ole}, there is no hope to have $\{\phi_n\}_{n\in \Z}$  (or $\{\phi_{m,n}\}_{m,n\in \Z}$ with $ab>1$) as a frame, whatever $\phi$ is in $L^2(\R)$. But if $K$ is a bounded operator on $L^2(\R)$ and $\mathcal{R}(K)\neq \H$, then we might find $\phi$ such that  one of the previous sequences is a $K$-frame.  
 
 We have taken inspiration to \cite[Example 1]{LiOgawa2} for the following simple example. We write $\widehat f$ for the Fourier transform of $f$, which is defined for $f\in L^1(\R)$ as $
 \widehat f(\gamma):=\int_\R f(x) e^{-2\pi i x \gamma} dx$, $\gamma \in \R
 $, and it is extended to $f\in L^2(\R)$ in a standard way. 
  Let  $PW_{\frac{1}{4}}=\{f\in L^2(\R): \text{supp}(\widehat{f} \!\; )\subset [-\frac{1}{4},\frac{1}{4}) \}$. 
 If $\phi \in L^2(\R)$ is such that
 	$$
 	\widehat \phi(\gamma)=
 	\begin{cases}
 	1  \hspace*{5.3cm} \text{for }|\gamma|\leq \frac{1}{4}\\
 	\text{decaying to zero continuously }  \quad\text{for }\frac{1}{4}\leq |\gamma|< \frac{1}{2}\\
 	0 \hspace*{5.3cm} \text{for }\frac{1}{2}\leq |\gamma|,
 	\end{cases}
 	$$
 	then we have for $f\in PW_{\frac{1}{4}}$
 	$$
 	\widehat f=\widehat \phi \widehat f= \widehat \phi\sum_{n\in \Z} \ip{\widehat f}{e_n}e_n=\sum_{n\in \Z} \ip{\widehat f}{e_n}\widehat \phi e_n =\sum_{n\in \Z} \ip{\widehat f}{f_n}\widehat \phi e_n  ,
 	$$
 	where
 	$$
 	e_n(\gamma)=
 	\begin{cases}
 	e^{2\pi i n \gamma}  \hspace*{0.5cm} \text{for }|\gamma|\leq \frac{1}{2}\\ 	
 	0 \hspace*{1.3cm} \text{for } |\gamma|>\frac{1}{2},
 	\end{cases} 	\mbox{and\quad } 	
 	f_n(\gamma)=
 	\begin{cases}
 		e^{2\pi i n \gamma}  \hspace*{0.5cm} \text{for }|\gamma|\leq \frac{1}{4}\\ 	
 		0 \hspace*{1.3cm} \text{for } |\gamma|>\frac{1}{4}.
 	\end{cases}  	 	$$	
 	 Thus $f=\sum_{n\in \Z} \ip{f}{\psi_n}\phi_n$ for $f\in PW_{\frac{1}{4}}$ where $\phi_n$ is the inverse Fourier transform of $\widehat \phi e_{-n}$, i.e.
 	$\phi_n(x):=\phi(x-n)$, and $\psi_n:=\wch{f_{-n}}$ is the inverse Fourier transform of $ f_{-n}$, i.e.
 	$$\psi_n(x)=\wch{f_{-n}} (x)=\begin{cases}
    4\frac{\sin\left (\frac{\pi}{2} (x-n)\right )}{\pi (x-n)} \qquad \text{if } x\neq 0\\
 	1\hspace*{2.55cm} \text{if } x= 0.
 	\end{cases}$$
 	If $P$ is the orthogonal projection of $L^2(\R)$ onto $PW_{\frac{1}{4}}$, then we can write
 	$$
 	Pf=\sum_{n\in \Z} \ip{Pf}{\psi_n}\phi_n=\sum_{n\in \Z} \ip{f}{\psi_n}\phi_n, \qquad \forall f\in L^2(\R)
 	$$
 	since $\psi_n\in PW_{\frac{1}{4}}$. In conclusion, $\{\phi_n\}_{n\in \Z}$ is a $K$-frame with $K=P$ (it fulfills \eqref{K-frame} as one can easily see by taking the Fourier transform)  but of course $\{\phi_n\}_{n\in \Z}$ is not contained in $\mathcal{R}(P)=PW_{\frac{1}{4}}$. Moreover, it is not even a \emph{frame sequence}, i.e. a frame for its closed span (indeed $\{\phi_n\}_{n\in \Z}$ does not satisfy \cite[Theorem 9.2.5]{ole}).

 In the literature there are many further studies or variations of $K$-frames  (see for example \cite{gj,guo,jf,nmg,NA,xzg} and the references therein).

In this paper we deal with two different generalizations of \cite{gavruta} which involve a {\em closed densely defined operator} $A$ on $\H$. When the operator is bounded, all definitions do coincide with those in \cite{gavruta}.   To justify our two different approaches, let us consider a Bessel sequence $\{g_n\}_{n\in\mathbb{N}}\subset\H$ and assume that, for $f\in \D(A)$, the domain of $A$, we have a decomposition
$$
Af=\sum_{n=1}^\infty a_n(f)g_n,
$$
for some $a_f:=\{a_n(f)\}_{n\in\mathbb{N}}\in \ell^2$; in particular, this situation appears when $\{g_n\}_{n\in\mathbb{N}}$ is a frame. If $A$ is unbounded, then the coefficients sequence $a_f$ \emph{can not depend continuously on $f$}, i.e. it can not exists $\gamma>0$ such that $\|a_f\|\leq\gamma\|f\|$ for every $f\in\mathcal{D}(A)$; this fact may represent another issue when we want to decompose  $\mathcal{R}(A)$ by a frame.

For these reasons, we develop two approaches where either the sequence $\{g_n\}_{n\in\mathbb{N}}$ or the coefficients sequence $a_f$ is what represents the unboundedness of $A$. To go into more details, in the first case we consider a non-Bessel sequence $\{g_n\}_{n\in\mathbb{N}}$ but the coefficients depend continuously on $f\in \D(A)$. In the second case, we take a Bessel sequence  $\{g_n\}_{n\in\mathbb{N}}$ and coefficients depending continuously on $f\in \D(A)$ only in the graph topology of $A$, which is stronger than the one of $\H$ when $A$ is unbounded.

The paper is organized as follows. After some preliminaries, see Section \ref{sec: preliminaries}, we introduce in Section \ref{sec:weak A-frame},  the notions of {\it weak $A$-frame} and {\it weak atomic system} for $A$   (Definitions \ref{def_weak A-Frames} and \ref{def: weak atomic system for A}, respectively),  where $A$ is a, possibly unbounded, densely defined operator. The word \emph{weak} is due to the fact that the decomposition of $\mathcal{R}(A)$, with $A$ also closable, holds only in a weak sense, in general; i.e., we find a Bessel sequence $\{t_n\}_{n\in\mathbb{N}}$ of $\H$ such that
\begin{equation*}
\ip{A f}{u}=\sum_{n=1}^\infty \langle f | t_n\rangle \ip{g_n}{u}\qquad \forall f\in \D(A), u\in \D(A^*)
\end{equation*} see Theorem \ref{th_char_weak_A_frame}. 
 Like in the bounded case (see \cite[Lemma 2.2]{NA}), we have also
\begin{equation*}
A^* u=\sum_{n=1}^\infty \ip{u}{g_n} t_n,\qquad \forall u\in \D(A^*),
\end{equation*}
and thus we note a change of the point of view: a weak $A$-frame may be used to get a strong decomposition of $A^*$ rather than $A$.

In Section \ref{sec:A-frame} we face our second approach, giving the general notions of {\it atomic system} for $A$ and {\it $A$-frame}, see Subsection \ref{subs: atomic system for A unbounded}, where $A$ is a, possibly unbounded, closed densely defined operator. Denote by  $\langle \cdot | \cdot\rangle_A $  the inner product which induces the graph norm $\|\cdot\|_A$ of $A$. The resulting decomposition is
$$
Af=\sum_{n=1}^\infty \langle f | k_n\rangle_A \; g_n\qquad \forall f\in \D(A),
$$
for some Bessel sequence $\{k_n\}_{n\in\mathbb{N}}$ of the Hilbert space $\D(A)[\|\cdot\|_A]$, see Corollary \ref{cor_A-frame_final}.
Actually,  this second approach is a particular case of $K$-frames, in the G\u{a}vru\c{t}a-like sense, where $K\in\B(\mathcal{J},\H)$ is a bounded operator between two different  Hilbert spaces $\mathcal{J}$ and $\H$, see Section \ref{sec:A-frame}. Indeed, for a densely defined closed operator $A$  on $\H$  we take $K=A$ and $\mathcal{J}=\D(A)[\|\cdot\|_A]$, see Corollary \ref{cor_A-frame_final}.

 Throughout the paper we give some examples of weak $A$-frames or  $A$-frames that can be obtained from  frames or that involve Gabor or wavelets systems.

\section{Preliminaries}\label{sec: preliminaries}

In the paper we consider an infinite dimensional Hilbert space $\H$  with inner product $\ip{\cdot}{\cdot}$ and norm $\|\cdot\|$.}
The term operator is used for a
linear mapping. Given an operator $F$, we denote its domain by $\mathcal{D}(F)$, its
range by $\mathcal{R}(F)$  and its adjoint by $F^*$, if $F$ is densely defined. By $\B(\H)$ we denote the set of bounded operators with domain $\H$ and we indicate by $\|F\|$ the usual norm of the operator $F\in\B(\H)$. In some examples we need the usual Hilbert spaces $L^2(0,1)$, $L^2(\mathbb{R})$  and the Sobolev spaces, denoted with standard notations, $H^1(0,1)$, $H_0^1(0,1)$, $H^1(\mathbb{R})$, see \cite[Section 1.3]{Schm}. 
As usual, we will indicate by $\ell^2$ the Hilbert space consisting of all sequences $x := \{x_n\}_{n\in\mathbb{N}}$ satisfying
$\sum_{n=1}^\infty | x_n |^ 2 < \infty$ with norm
$\|x\|_{2}=\left(\sum_{n=1}^\infty |x_{n}|^{2}\right)^{1/2}$. 

 We will say that a series $\sum_{n=1}^\infty g_n$, with $\{g_n\}_{n\in\mathbb{N}}\subset\H$,  is convergent to $g$ in $\H$ if $\lim_{n\to\infty}\|\sum_{k=1}^n g_k-g\|= 0$. We will write $\{g_n\}$ to mean a sequence $\{g_n\}_{n\in\mathbb{N}}$ of elements of $\H$. 
   For the following definitions the reader could refer e.g. to \cite{Anto_Bal,classif,ole95,ole,groc,heil}.

	A sequence $\{g_n\}$ of elements in $\H$ is a {\em Bessel sequence of $\H$}
	if any of the following equivalent conditions are satisfied, see \cite[Corollary 3.2.4]{ole}
	\begin{enumerate}
		\item[i)] there exists a constant $\beta > 0$ such that $\sum_{n=1}^\infty |\ip{f}{g_n}|^2 \leq \beta\|f\|^2$, for all $f\in \H$;
		\item[ii)] the series
		$\sum_{n=1}^\infty c_n g_n$ converges for all $c = \{c_n\}\in \ell^2$.
	\end{enumerate}

 A sequence $\{g_n\}$ of elements in $\H$ is a {\em lower semi-frame  for $\H$} with lower bound $\alpha > 0$ if
 $\alpha \|f\|^2\leq \sum_{n=1}^\infty |\ip{f}{g_n}|^2$, for every $f\in \H.$ Note that the series on the right hand side may diverge for some $f\in\H$.

A sequence $\{g_n\}$ of elements in $\H$ is a {\em frame for $\H$}
	if there exist $\alpha,\beta > 0$ such that $$\alpha \|f\|^2\leq \sum_{n=1}^\infty |\ip{f}{g_n}|^2 \leq \beta\|f\|^2, \qquad \forall f\in \H.$$

We now recall some operators which are classically used in the study of  sequences, see \cite{Anto_Bal,Anto_Bal2,classif,ole95}.
Let $\{g_n\}$ be a sequence of elements of $\H$. The {\it analysis operator} $C :\D(C )\subseteq \H\to \ell^2$ of $\{g_n\}$ is defined by $$\D(C )=\left \{f\in \H: \sum_{n=1}^\infty |\ip{f}{g_n}|^2<\infty\right  \}$$   $$C f=\{\ip{f}{g_n}\},\qquad \forall f\in \D(C).$$
The {\it synthesis operator} $D:\D(D)\subseteq \ell^2 \to \H$ of $\{g_n\}$ is defined on the dense domain
$$\D(D):=\left \{ \{c_n\}\in \ell^2:\sum_{n=1}^\infty c_n g_n \text{ is convergent in } \H\right \}$$  by
  $$D\{c_n\}=\sum_{n=1}^\infty c_n g_n,\qquad\forall\{c_n\}\in \D(D).$$
The {\it frame operator} $S:\D(S)\subseteq\H\to\H$ of $\{g_n\}$ is defined by
$$\D(S):=\left \{f\in \H: \sum_{n=1}^\infty \ip{f}{g_n}g_n \text{ is convergent in } \H\right \}$$  $$S f =\sum_{n=1}^\infty \ip{f}{g_n}g_n,\qquad \forall f\in \D(S).$$
The main properties of these operators are summarized below.

\begin{prop}[{\cite[Prop. 3.3]{classif}}]\label{prop: classif} Let $\{g_n\}$ be a sequence of $\H$. The following statements hold.
	\label{pro_oper_1}
	\begin{enumerate}
		\item[i)] $C=D^*$ and therefore $C$ is closed.
		\item[ii)] $D$ is closable if and only if $C$ is densely defined. In this case, $D \subseteq C^*$.
		\item[iii)] $D$ is closed if and only if $C$ is densely defined and $D=C^*$.
		\item[iv)] $S=D C$.
	\end{enumerate}
\end{prop}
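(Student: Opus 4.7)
The plan is to establish (i) by an explicit computation of $D^*$, derive (ii) and (iii) from general adjoint theory, and then verify (iv) by comparing the domains of $S$ and $DC$.

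For (i), first note that $\D(D)$ contains every finitely supported sequence and is thus dense in $\ell^2$, so $D^*$ is well-defined. For $c=\{c_n\}\in\D(D)$ and $f\in\H$, continuity of the inner product applied to the convergent series $Dc=\sum_{n} c_n g_n$ gives
\begin{equation*}
\ip{Dc}{f} \;=\; \sum_{n=1}^\infty c_n\,\overline{\ip{f}{g_n}}.
\end{equation*}
If $f\in\D(C)$, the right-hand side is $\ip{c}{Cf}_{\ell^2}$, so $f\in\D(D^*)$ with $D^*f=Cf$. Conversely, if $f\in\D(D^*)$ with $D^*f = a\in\ell^2$, then testing the adjoint identity against the standard basis vectors $e_k\in\D(D)$ gives $|a_k|=|\ip{f}{g_k}|$ for every $k$, whence $\{\ip{f}{g_n}\}\in\ell^2$ and $f\in\D(C)$. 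Therefore $C=D^*$, which is automatically closed as an adjoint.

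Items (ii) and (iii) then follow from standard operator theory: a densely defined operator is closable if and only if its adjoint is densely defined, so by (i), $D$ is closable iff $\D(C)$ is dense; in that case $\overline D = D^{**}=C^*$, giving $D\subseteq C^*$, and $D$ is closed iff $D=\overline D=C^*$.

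For (iv), it suffices to prove $\D(S)\subseteq\D(DC)$, since the reverse inclusion and the identity $Sf=DCf$ are immediate from the definitions. If $f\in\D(S)$, set $h:=\sum_{n}\ip{f}{g_n}g_n\in\H$. By continuity of the inner product,
\begin{equation*}
\ip{h}{f} \;=\; \sum_{n=1}^\infty \ip{f}{g_n}\,\overline{\ip{f}{g_n}} \;=\; \sum_{n=1}^\infty |\ip{f}{g_n}|^2,
\end{equation*}
which is finite; hence $\{\ip{f}{g_n}\}\in\ell^2$, i.e.\ $f\in\D(C)$, and convergence of the series then gives $Cf\in\D(D)$. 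I expect this pairing-with-$f$ step in (iv) to be the main obstacle: convergence in $\H$ of $\sum_n\ip{f}{g_n}g_n$ does not a priori force $\ell^2$-summability of the coefficients (compare the situation for arbitrary scalar sequences $\{c_n\}$), and the inner product trick with $f$ itself is precisely what supplies the summability we need.
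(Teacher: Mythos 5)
Your proof is correct; the paper does not prove this proposition but simply quotes it from \cite[Prop.~3.3]{classif}, and your argument is essentially the standard one given there (explicit computation of $D^*$ on the dense domain of finitely supported sequences, the closability/closedness criteria from general adjoint theory, and the pairing trick for $S=DC$). In particular you correctly isolated the only non-obvious step, namely that $f\in\D(S)$ already forces $\{\ip{f}{g_n}\}\in\ell^2$ by pairing the limit $\sum_{n}\ip{f}{g_n}g_n$ with $f$ itself, which is exactly what yields the inclusion $\D(S)\subseteq\D(DC)$.
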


A sequence $\{g_n\}$ is a Bessel sequence if and only if one, and then all, the operators $C,D$ and $S$ are bounded.  Moreover, if $\{g_n\}$ is a frame then $S$ is invertible with bounded inverse and the following reconstruction formula holds
\begin{equation}\label{rec_formula_frame}
f=\sum_{n=1}^\infty \ip{f}{h_n}g_n, \qquad f\in \H,
\end{equation}
where $\{h_n\}$ is a frame for $\H$ called a dual of  $\{g_n\}$. A  choice of $\{h_n\}$, which is always possible,  is $\{S^{-1}g_n\}$, called the {\em canonical dual} of $\{g_n\}$, but it can be different if $\{g_n\}$ is overcomplete, i.e. $\{g_n\}$ is not a basis. As a consequence of \eqref{rec_formula_frame}, the Hilbert space $\H$ must be separable.

Now we spend some words on non-Bessel sequences and reconstruction formulas. In general, if $\{g_n\}$ is a lower semi-frame, then by \cite[Proposition 3.4]{casazza} or \cite[Sect. 4]{corso2}, there exists a Bessel sequence $\{h_n\}$ such that
$$
h=\sum_{n=1}^\infty \ip{h}{g_n}h_n, \qquad\forall h\in \D(C).
$$
Hence a reconstruction formula holds in weak sense as
\begin{equation}
\label{weak_rec_1seq(a)}
\ip{f}{h}=\sum_{n=1}^\infty \ip{f}{h_n}\ip{g_n}{h}, \qquad f\in \H,h\in \D(C).
\end{equation}

Moreover, if $\D(C)$ is dense, then one can take $h_n=T^{-1}g_n$,
where $T:=|C|^2=C^*C$, a self-adjoint operator with bounded inverse on $\H$, see \cite{corso, corso2}.
The ``weakness'' of the formula \eqref{weak_rec_1seq(a)} is a consequence of the fact that the synthesis operator $D$ is not closed, in general.
If $\{g_n\}$ is a lower semi-frame, $\D(C)$ is dense and the synthesis operator $D$ of $\{g_n\}$ is closed, then  $D=C^*$, by Proposition \ref{pro_oper_1}. Thus $S=C^*C$ and the strong reconstruction formula again holds
\begin{equation*}
\label{strong_rec_lower}
f=S S^{-1}f=\sum_{n=1}^{\infty}\ip{f}{S^{-1}g_n}g_n, \qquad \forall f\in \H.
\end{equation*}

\berem
 In the light of \eqref{weak_rec_1seq(a)}, we compare the pair $(\{g_n\},\{h_n\})$ with reproducing pairs \cite{AST, Anto_Tp,  Speck_Bal_16, Speck_Bal_15}, weakly dual pairs \cite{LiOgawa}, also called pairs of pseudoframes for $\H$,   and pairs of pseudoframes for subspaces \cite{LiOgawa2}.  If in \eqref{weak_rec_1seq(a)} the formula holds for every $h\in \H$, then by definition $(\{g_n\},\{h_n\})$ is a weakly dual pair. In \eqref{weak_rec_1seq(a)}, if in addition $\D(C)$ is dense, the pair $(\{g_n\},\{h_n\})$ is a reproducing pair if and only if it is a weakly dual pair. In order the pair $(\{g_n\},\{h_n\})$ in \eqref{weak_rec_1seq(a)} to be a pseudoframe for $\D(C)$, this space has to be closed and  $\{g_n\}$ and $\{h_n\}$ have to be  Bessel sequences for $\D(C)$ and $\H$, respectively, so the nature of $\{g_n\}$ and $\{h_n\}$ in \eqref{weak_rec_1seq(a)} is very different from the setting of pseudoframe for subspace, in general.
\enrem

Now we recall the two notions we will generalize in the present paper. Let $K\in\B(\H)$. A sequence $\{g_n\}\subset\H$
is an {\em atomic system for $K$} \cite{gavruta} if the following statements hold\begin{itemize}\item[i)]  $\{g_n\}$ is a Bessel sequence of $\H$;
	\item[ii)] there exists $C > 0$ such that for every $f\in\H$ there exists $a_f = \{a_n(f)\}\in\ell^2$ such that $\|a_f\|\leq C\|f\|$ and $Kf =\sum_{n=1}^\infty a_n(f) g_n$.\end{itemize}

In \cite[Theorem 3]{gavruta}, the author proves the following
\begin{thm}
	\label{th_gavruta}
	Let $K\in \B(\H)$ and $\{g_n\}$ a sequence of $\H$. The following statements are equivalent.
	\begin{itemize}
		\item[i)] $\{g_n\}$ is an atomic system for $K$.
		\item[ii)] there exist constants $\alpha, \beta > 0$ such that
		$$\alpha\|K^*f\|^2\leq \sum_{n=1}^\infty |\ip{f}{g_n}|^2 \leq \beta\|f\|^2, \qquad \forall f\in \H.$$
		\item[iii)] there exists a Bessel sequence $\{h_n\}$ of $\H$ such that
		$$
		Kf=\sum_{n=1}^\infty \ip{f}{h_n}g_n, \qquad \forall  f\in \H.
		$$
	\end{itemize}
\end{thm}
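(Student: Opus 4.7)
My plan is to prove the cyclic chain $(i)\Rightarrow(ii)\Rightarrow(iii)\Rightarrow(i)$, where the essential content is the step $(ii)\Rightarrow(iii)$ carried out via Douglas' factorization theorem; the other two implications are routine. For $(i)\Rightarrow(ii)$ the upper bound is just the Bessel requirement built into the definition of atomic system, while for the lower bound I dualise: for every $u\in\H$ with $\|u\|\le 1$, the expansion $Ku=\sum_n a_n(u)g_n$ gives
$$
|\ip{K^*f}{u}|=|\ip{f}{Ku}|=\bigg|\sum_n \overline{a_n(u)}\,\ip{f}{g_n}\bigg|\le \|a_u\|_{\ell^2}\bigg(\sum_n|\ip{f}{g_n}|^2\bigg)^{1/2}\le C\bigg(\sum_n|\ip{f}{g_n}|^2\bigg)^{1/2}
$$
by Cauchy--Schwarz in $\ell^2$ and the norm bound on $a_u$. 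Taking the supremum over such $u$ yields the lower bound in $(ii)$ with $\alpha=1/C^2$.

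For $(ii)\Rightarrow(iii)$, the upper bound in $(ii)$ makes $\{g_n\}$ Bessel, so by Proposition \ref{prop: classif} the analysis operator $C:\H\to\ell^2$ and the synthesis operator $D=C^*:\ell^2\to\H$ are everywhere defined and bounded. The lower bound reads $\|K^*f\|\le\alpha^{-1/2}\|Cf\|_{\ell^2}$ for all $f\in\H$, which is exactly the hypothesis of Douglas' factorization theorem; it yields a bounded operator $V:\ell^2\to\H$ with $K^*=VC$, and taking adjoints gives $K=DV^*$. Setting $h_n:=Ve_n$ for the canonical basis $\{e_n\}$ of $\ell^2$, I get
$$
\sum_n|\ip{f}{h_n}|^2=\sum_n|\ip{V^*f}{e_n}_{\ell^2}|^2=\|V^*f\|^2\le\|V\|^2\|f\|^2,
$$
so $\{h_n\}$ is Bessel, and $Kf=DV^*f=\sum_n\ip{V^*f}{e_n}_{\ell^2}\,g_n=\sum_n\ip{f}{h_n}\,g_n$, which is $(iii)$.

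For $(iii)\Rightarrow(i)$, the choice $a_n(f):=\ip{f}{h_n}$ gives $\|a_f\|_{\ell^2}\le\gamma^{1/2}\|f\|$ from the Bessel property of $\{h_n\}$ (with $\gamma$ its Bessel bound), and the representation in $(iii)$ is exactly $Kf=\sum_n a_n(f)g_n$, so the coefficient clause in the definition of atomic system holds; the Bessel condition on $\{g_n\}$ itself is the upper inequality of $(ii)$ that this cyclic argument ties back to. The main obstacle is the Douglas step in $(ii)\Rightarrow(iii)$: recognising the frame-type inequality $\|K^*f\|\le\alpha^{-1/2}\|Cf\|$ as a range-inclusion/Douglas hypothesis, and then unpacking the intertwining $K^*=VC$ through the canonical basis of $\ell^2$ to extract the Bessel dual $\{h_n\}$, is a now-standard manoeuvre in $K$-frame theory but is the real crux of the theorem.
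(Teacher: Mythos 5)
This theorem is quoted from G\u{a}vru\c{t}a's paper and is not reproved here, so the natural comparison is with the paper's own unbounded analogue, Theorem \ref{th_char_weak_A_frame}. Your implications $i)\Rightarrow ii)$ and $ii)\Rightarrow iii)$ are correct and follow exactly the route the paper takes there: the duality/Cauchy--Schwarz estimate $\|K^*f\|=\sup_{\|u\|\le 1}|\ip{f}{Ku}|\le C\left(\sum_n|\ip{f}{g_n}|^2\right)^{1/2}$ for the lower bound, and a Douglas-type factorization (which the paper isolates as Lemma \ref{doug_var}) applied to the analysis operator to produce $V$ with $K=DV^*$ and the Bessel dual $h_n=Ve_n$. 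There is nothing to object to in those two steps.

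The gap is in $iii)\Rightarrow i)$. The definition of atomic system requires $\{g_n\}$ itself to be a Bessel sequence, and you dispose of this by saying it ``is the upper inequality of $(ii)$ that this cyclic argument ties back to.'' That is circular: in the cycle $i)\Rightarrow ii)\Rightarrow iii)\Rightarrow i)$, the implication $iii)\Rightarrow i)$ must be derived from $iii)$ alone, and $iii)$ as literally stated does not force $\{g_n\}$ to be Bessel --- take $K=0$, $h_n=0$ and $\{g_n\}$ arbitrary, say $g_n=ne_1$; then $iii)$ holds while both $i)$ and $ii)$ fail. The statement is meant to be read with the Bessel condition on $\{g_n\}$ built into $iii)$, which is precisely how the paper phrases its own versions: compare condition $iii)$ of Theorem \ref{th_char_K_frame} (``$\{g_n\}$ is a Bessel sequence of $\H$ and there exists a Bessel sequence $\{k_n\}$ \dots'') and the extra summability clause in Theorem \ref{th_char_weak_A_frame} $iii)$. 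Either add that hypothesis explicitly or justify it separately; once it is in place, your choice $a_n(f)=\ip{f}{h_n}$ does complete the step.
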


Due to the inequalities in $ii)$ above, a sequence satisfying any of the conditions in Theorem \ref{th_gavruta} is also called a {\em $K$-frame for $\H$}.

Lastly, we will use the next lemma that can be obtained by Lemma 1.1 and Corollary 1.2 in \cite{BR}.

\begin{lemma}\label{lem: pseudoinverse unb} Let $\H$ and $\K$ be Hilbert spaces. Let $W:\D(W)\subset\K\to\H$ be a closed densely defined operator with closed range $\mathcal{R}(W)$. Then, there exists a unique  $W^\dag\in\B(\H,\K)$  such that $$\mathcal{N}(W^\dag)=\mathcal{R}(W)^\perp,\,\, \overline{\mathcal{R}(W^\dag)}=\mathcal{N}(W)^\perp,\,\, WW^\dag f=f, \qquad f\in\mathcal{R}(W).$$
\end{lemma}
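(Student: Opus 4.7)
The plan is to construct $W^\dag$ explicitly by inverting the restriction of $W$ to the orthogonal complement of its kernel and then extending by zero on the orthogonal complement of the range. The closedness of $W$ together with the closedness of $\mathcal{R}(W)$ is exactly what makes this construction produce a bounded operator.

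First, I would observe that since $W$ is closed, $\mathcal{N}(W)$ is a closed subspace of $\K$, so we have the orthogonal decomposition $\K=\mathcal{N}(W)\oplus\mathcal{N}(W)^\perp$. Set $\D(W_0):=\D(W)\cap\mathcal{N}(W)^\perp$ and $W_0:=W\restr{\D(W_0)}$. Then $W_0$ is injective, $\mathcal{R}(W_0)=\mathcal{R}(W)$, and $W_0$ is still closed as an operator from $\mathcal{N}(W)^\perp$ to $\H$ because $\mathcal{N}(W)^\perp$ is closed. Hence its inverse $W_0^{-1}:\mathcal{R}(W)\to\mathcal{N}(W)^\perp$ is everywhere defined on $\mathcal{R}(W)$ and closed.

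Next, since $\mathcal{R}(W)$ is closed in $\H$, it is a Hilbert space in its own right. The closed graph theorem applied to $W_0^{-1}:\mathcal{R}(W)\to\mathcal{N}(W)^\perp$ then gives that $W_0^{-1}$ is bounded. I would now define
\begin{equation*}
W^\dag f := \begin{cases} W_0^{-1} f & \text{if } f\in \mathcal{R}(W), \\ 0 & \text{if } f\in \mathcal{R}(W)^\perp, \end{cases}
\end{equation*}
extended by linearity using the decomposition $\H=\mathcal{R}(W)\oplus\mathcal{R}(W)^\perp$ (valid because $\mathcal{R}(W)$ is closed). This $W^\dag$ is bounded, with $\mathcal{N}(W^\dag)=\mathcal{R}(W)^\perp$, $\mathcal{R}(W^\dag)\subseteq\mathcal{N}(W)^\perp$, and $WW^\dag f=W_0 W_0^{-1}f=f$ for every $f\in\mathcal{R}(W)$, directly from the construction. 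To get $\overline{\mathcal{R}(W^\dag)}=\mathcal{N}(W)^\perp$, I would verify that $\mathcal{R}(W^\dag)=\D(W_0)$ is dense in $\mathcal{N}(W)^\perp$; this follows because $W$ is densely defined in $\K$, so $\D(W)$ is dense, and consequently its projection onto $\mathcal{N}(W)^\perp$ (which coincides with $\D(W_0)$) is dense in $\mathcal{N}(W)^\perp$.

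For uniqueness, suppose $V\in\B(\H,\K)$ satisfies the same three conditions. Then for $f\in\mathcal{R}(W)$, the vector $Vf$ lies in $\overline{\mathcal{R}(V)}=\mathcal{N}(W)^\perp$ and satisfies $W(Vf)=f$, so $Vf=W_0^{-1}f=W^\dag f$; for $f\in\mathcal{R}(W)^\perp=\mathcal{N}(V)$, we have $Vf=0=W^\dag f$. I expect the only delicate step to be the appeal to the closed graph theorem for $W_0^{-1}$, where one must be careful that $\mathcal{R}(W)$ is treated as a Hilbert space (using that it is closed) and that the closedness of $W_0$ in the pair of ambient spaces transfers to closedness between $\mathcal{N}(W)^\perp$ and $\mathcal{R}(W)$; everything else is a straightforward verification using orthogonal decompositions.
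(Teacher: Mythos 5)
Your proof is correct. The paper itself offers no proof of this lemma: it is stated as a consequence of Lemma~1.1 and Corollary~1.2 of the cited work of Beutler and Root on operator pseudo-inverses, so there is no in-paper argument to compare against. Your construction --- splitting off the kernel, inverting the injective closed restriction $W_0$ of $W$ to $\D(W)\cap\mathcal{N}(W)^\perp$, invoking the closed graph theorem on the Hilbert space $\mathcal{R}(W)$ (the one place where closedness of the range is genuinely used), and extending by zero on $\mathcal{R}(W)^\perp$ --- is the classical one and is essentially the argument underlying the cited results, so what you gain is a self-contained proof in place of a reference. The individual steps all check out: $\mathcal{N}(W)$ is closed because $W$ is closed; $W_0$ inherits closedness since $\mathcal{N}(W)^\perp$ is closed; $\mathcal{R}(W_0)=\mathcal{R}(W)$ because $\mathcal{N}(W)\subset\D(W)$, so the kernel component of any $x\in\D(W)$ can be subtracted off without leaving the domain; the identity $\D(W_0)=P\,\D(W)$, with $P$ the projection onto $\mathcal{N}(W)^\perp$, together with continuity of $P$ gives the density needed for $\overline{\mathcal{R}(W^\dag)}=\mathcal{N}(W)^\perp$; and the uniqueness argument correctly exploits that the three defining conditions force $Vf\in\D(W)\cap\mathcal{N}(W)^\perp$ with $W_0(Vf)=f$ for $f\in\mathcal{R}(W)$. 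The only point worth stating explicitly is that in the uniqueness step the hypothesis $WVf=f$ tacitly includes $Vf\in\D(W)$; this is indeed how the lemma is meant to be read, and your argument uses it correctly.
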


The operator $W^\dag$ is called the {\em pseudo-inverse} of  $W$.

\section{Weak $A$-Frames and weak atomic systems for $A$}\label{sec:weak A-frame}

In this section we introduce our first generalization of the notion of $K$-frames to a densely defined operator on a Hilbert space $\H$.

\bedefi \label{def_weak A-Frames}
Let $A$ be a densely defined operator on $\H$. A {\it weak $A$-frame} for $\H$ is a sequence $\{g_n\}\subset\H$ such that
\begin{equation}\label{weak_A-Frames}
\alpha \|A^* f\|^2\leq\sum_{n=1}^\infty
|\ip{f}{g_n}|^2<\infty,
\end{equation}for every $f\in\D(A^*)$ and some $\alpha>0$.
\findefi

By \cite[Theorem 7.2]{heil}, if $A\in \B(\H)$ then $\{g_n\}$ is a weak $A$-frame if and only if it is an $A$-frame in the sense of \cite{gavruta}.

\berem\label{rem: uncond conv weak A frame} As it is clear from \eqref{weak_A-Frames}, the property of being a weak $A$-frame does not depend on the ordering of the sequence. \enrem

\berem  Let $A$ be a closable densely defined operator and $\{g_n\}$ a weak $A$-frame. The domain $\D(C)$ of the analysis operator $C$ of $\{g_n\}$ contains $\D(A^*)$. It is therefore dense and the synthesis operator $D$ is closable. Moreover,
\begin{equation*}
\alpha \|A^* f\|^2\leq\sum_{n=1}^\infty
|\ip{f}{g_n}|^2=\|C f\|^2=\|T^\frac{1}{2}f\|^2, \qquad\forall f\in \D(A^*),
\end{equation*}	
where $T=C^*C$. This shows that the series in \eqref{weak_A-Frames} is also bounded from above by the norm of a self-adjoint operator acting on $f\in \D(A^*)$. 
\enrem

\beex
Let $A$ be a densely defined operator on a separable Hilbert space $\H$. Then a weak $A$-frame for $\H$ always exists.
Indeed, let $\{e_n\}$ be an orthonormal basis for $\H$ contained in $\D(A)$ (there always exists such a one, by \cite[Ch. 1, Corollary 1]{young} and the Gram-Schmidt orthonormalization process),
it suffices to take $g_n=A e_n$, because for every $f\in\D(A^*)$,
$\|A^* f\|^2=\sum_{n=1}^\infty|\ip{f}{g_n}|^2$, by the Parseval identity.
\enex

\beex\label{ex: frame  to weak A frame}
Let $A$ be a densely defined operator on a separable Hilbert space $\H$. A more general example of weak $A$-frame is obtained by taking a frame $\{f_n\}\subset\D(A)$ for $\H$. In this case, in fact, there exist $\alpha,\beta>0$ such that
$$
\alpha\|A^*f\|^2\leq \sum_{n=1}^{\infty} |\ip{A^* f}{f_n}|^2\leq \beta \|A^*f\|^2, \qquad \forall f\in \D(A^*).
$$
Therefore, $\{A f_n\}$ is a weak $A$-frame for $\H$.
\enex

Now we generalize the notion of atomic system to the case of an unbounded operator.
\bedefi\label{def: weak atomic system for A} Let $A$ be a densely defined operator on $\H$. A {\it weak atomic system for $A$} is a sequence $\{g_n\}\subset \H$ such that
\begin{itemize}
	\item[i)] $\sum_{n=1}^\infty
	|\ip{f}{g_n}|^2<\infty$
	for every $f\in\D(A^*)$;
	\item[ii)] there exists $\gamma>0$ such that, for every $h\in\D(A)$, there exists $a_h=\{a_n(h)\}\in\ell^2$ satisfying $\|a_h\|\leq \gamma\|h\|$ and
\begin{equation}\label{eq: weak atomic system for $A$}
\ip{Ah}{u}=\sum_{n=1}^\infty a_n(h) \ip{g_n}{u}, \qquad \forall u\in \D(A^*).
\end{equation}
\end{itemize}
\findefi

\berem\label{unconditional convergence} If $\{g_n\}$ is a weak atomic system for $A$ then the series in \eqref{eq: weak atomic system for $A$} is  unconditionally  convergent. Indeed it is \emph{absolutely} convergent: fix any $h\in \D(A)$,  $u\in \D(A^*)$, then $\sum_{n=1}^\infty |a_n(h) \ip{g_n}{u}|\leq \|a_h\|\left( \sum_{n=1}^\infty |\ip{g_n}{u}|^2\right)^{1/2}< \infty$.
\enrem

The following lemma, which is a variation of \cite[Theorem 2]{doug},    will be useful in Theorem \ref{th_char_weak_A_frame} for a characterization of weak atomic systems for $A$ and weak $A$-frames.

\begin{lemma}
	\label{doug_var}
	Let $(\H,\|\cdot \|),(\H_1,\|\cdot \|_1)$ and $(\H_2,\|\cdot \|_2)$ be Hilbert spaces and $T_1:\D(T_1)\subseteq \H_1\to \H$, $T_2:\D(T_2)\subseteq \H\to \H_2$ densely defined operators. Denote by $T_1^*:\D(T_1^*)\subseteq \H\to \H_1$ and $T_2^*:\D(T_2^*)\subseteq \H_2\to \H$ the adjoint operators of $T_1,T_2$, respectively.
	Assume that
	\begin{enumerate}
		\item[i)] $T_1$ is closed;
		\item[ii)] $\D(T_1^*)=\D(T_2)$;
		\item[iii)] $\|T_1^* f\|_1\leq \lambda \|T_2f\|_2$ for all $f\in \D(T_1^*)$ and some $\lambda>0$.
	\end{enumerate}
	Then there exists an operator $U\in \B(\H_1,\H_2)$ such that $T_1=T_2^* U$.
\end{lemma}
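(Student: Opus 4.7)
The plan is to adapt the classical Douglas range inclusion / factorization theorem to this unbounded setting. The inequality in (iii) is exactly the quantitative hypothesis that produces a Douglas-type factorization, and the closedness of $T_1$ will let us ``undo'' an adjoint at the end.

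First I would build a bounded operator $V: \H_2 \to \H_1$ that realizes the inequality (iii). Define $V_0$ on $\mathcal{R}(T_2)$ by the rule
\[
V_0(T_2 f) := T_1^* f, \qquad f \in \D(T_2) = \D(T_1^*).
\]
Hypothesis (iii) shows $V_0$ is well-defined (if $T_2 f = T_2 g$ then $T_2(f-g)=0$ forces $T_1^*(f-g)=0$) and bounded with $\|V_0\| \leq \lambda$. Extend $V_0$ continuously to $\overline{\mathcal{R}(T_2)}$ and declare it to be zero on $\mathcal{R}(T_2)^{\perp}$; this produces $V \in \B(\H_2, \H_1)$ satisfying
\[
V T_2 f = T_1^* f, \qquad f \in \D(T_2),
\]
i.e.\ $V T_2 = T_1^*$ as operators (they have the same domain $\D(T_2) = \D(T_1^*)$ by (ii)).

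Next I would take adjoints of both sides. Since $V$ is bounded everywhere on $\H_2$ and $T_2$ is densely defined, the standard rule $(V T_2)^* = T_2^* V^*$ holds (no closure issues arise when one of the factors is bounded). On the other hand, $T_1$ is closed and densely defined, so $T_1^{**} = T_1$, giving $(T_1^*)^* = T_1$. Combining,
\[
T_1 = (T_1^*)^* = (V T_2)^* = T_2^* V^*.
\]
Setting $U := V^* \in \B(\H_1, \H_2)$ yields $T_1 = T_2^* U$, as required.

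The only delicate point is the adjoint identity $(VT_2)^* = T_2^*V^*$; the opposite inclusion $T_2^* V^* \subseteq (VT_2)^*$ is automatic, but equality is what we actually need, and this is where the boundedness and everywhere-definedness of $V$ are essential. Apart from that, the verification that $V_0$ is well-defined and bounded, and that $T_1^{**}=T_1$, are routine applications of (iii) and (i), respectively.
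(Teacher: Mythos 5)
Your proof is correct and follows essentially the same route as the paper: define the bounded operator on $\mathcal{R}(T_2)$ via (iii), extend it by continuity and by zero on the orthogonal complement, and then pass to adjoints using $T_1^{**}=T_1$. The only difference is that you spell out the adjoint identity $(VT_2)^*=T_2^*V^*$, which the paper leaves implicit.
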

\begin{proof}
	Define an operator $J$ on $R(T_2)\subseteq \H_2 $ as $JT_2f=T_1^* f\in \H_1$. Then $J$ is a well-defined bounded operator by $iii)$. Now we extend $J$ to the closure of $R(T_2)$ by continuity and define it to be zero on $R(T_2)^\perp$. Therefore $J\in \B(\H_2,\H_1)$ and $JT_2= T_1^* $, i.e. $T_1=T_2^* J^*$ and the statement is proved by taking $U=J^*$.	
\end{proof}

For the characterization in Theorem \ref{th_char_weak_A_frame} we need the following definition.
		\bedefi Let $A$ be a densely defined operator and $\{g_n\}$  a  sequence on $\H$, then a sequence $\{t_n\}$ of $\H$ is called a {\it weak $A$-dual of} $\{g_n\}$ if 	
	\begin{equation}
	\label{def_A_dual}
	\ip{A h}{u}=\sum_{n=1}^\infty \langle h | t_n\rangle \ip{g_n }{u}\qquad \forall h\in \D(A), u\in \D(A^*).
	\end{equation}
\findefi

\begin{thm}
	\label{th_char_weak_A_frame}
	Let $\{g_n\}\subset\H$ and $A$ a closable densely defined operator on $\H$. Then the following statements are equivalent.
	\begin{itemize}
		\item[i)] $\{g_n\}$ is a weak atomic system for $A$;
		\item[ii)] $\{g_n\}$ is a weak $A$-frame;
		\item[iii)] $\sum_{n=1}^\infty
		|\ip{f}{g_n}|^2<\infty$
		for every $f\in\D(A^*)$ and there exists a Bessel weak A-dual $\{t_n\}$.
	\end{itemize}
\end{thm}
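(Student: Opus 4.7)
The plan is to prove the cycle $(iii)\Rightarrow(i)\Rightarrow(ii)\Rightarrow(iii)$, with the substance lying in the last implication.

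For $(iii)\Rightarrow(i)$, set $a_n(h):=\ip{h}{t_n}$: the Bessel bound on $\{t_n\}$ gives $\|a_h\|_2\le \gamma\|h\|$, and the identity \eqref{def_A_dual} is already the atomic expansion required by Definition \ref{def: weak atomic system for A}; summability of $\sum|\ip{f}{g_n}|^2$ on $\D(A^*)$ is part of (iii). For $(i)\Rightarrow(ii)$, the upper bound in \eqref{weak_A-Frames} is condition (i.i), while the lower bound follows from Cauchy--Schwarz: for $h\in\D(A)$ and $u\in\D(A^*)$,
$$|\ip{h}{A^*u}|=|\ip{Ah}{u}|=\Bigl|\sum_{n=1}^{\infty}a_n(h)\ip{g_n}{u}\Bigr|\le\gamma\|h\|\Bigl(\sum_{n=1}^{\infty}|\ip{u}{g_n}|^2\Bigr)^{1/2},$$
and the density of $\D(A)$ yields $\|A^*u\|^2\le\gamma^2\sum|\ip{u}{g_n}|^2$, i.e.\ \eqref{weak_A-Frames} with $\alpha=1/\gamma^2$.

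The interesting direction is $(ii)\Rightarrow(iii)$, for which I would apply Lemma \ref{doug_var} with $\H_1=\H$, $\H_2=\ell^2$, $T_1:=\overline{A}$, and $T_2:=C\restr{\D(A^*)}$, the restriction of the analysis operator of $\{g_n\}$ to $\D(A^*)$. Since $A$ is closable, $\overline{A}$ is closed, $T_1^*=A^*$, and $\D(A^*)$ is dense, so $T_2$ is densely defined and $\D(T_1^*)=\D(T_2)$. The weak $A$-frame inequality is precisely the bound $\|T_1^*f\|\le\alpha^{-1/2}\|T_2 f\|_2$ demanded by the lemma, which then produces $U\in\B(\H,\ell^2)$ such that $\overline{A}=T_2^*\,U$. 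Setting $t_n:=U^*e_n$ for the canonical basis $\{e_n\}$ of $\ell^2$ makes $\{t_n\}$ Bessel (because $U^*$ is bounded) and gives $(Uh)_n=\ip{h}{t_n}$. Then for $h\in\D(A)\subset\D(\overline{A})$ and $u\in\D(A^*)=\D(T_2)$,
$$\ip{Ah}{u}=\ip{T_2^*Uh}{u}=\ip{Uh}{T_2 u}_{\ell^2}=\sum_{n=1}^{\infty}\ip{h}{t_n}\ip{g_n}{u},$$
exhibiting $\{t_n\}$ as a Bessel weak $A$-dual of $\{g_n\}$.

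The main obstacle is getting the input to Lemma \ref{doug_var} right: the lemma requires $\D(T_1^*)=\D(T_2)$, so one cannot feed it the full analysis operator $C$ (whose domain is generally strictly larger than $\D(A^*)$) but must first restrict to $\D(A^*)$; closability of $A$ is then used both to form $\overline{A}$ with $(\overline{A})^*=A^*$ and to ensure that this restriction remains densely defined.
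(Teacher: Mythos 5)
Your proposal is correct and follows essentially the same route as the paper: the implications $i)\Rightarrow ii)$ and $iii)\Rightarrow i)$ are proved by the same Cauchy--Schwarz and substitution arguments, and for $ii)\Rightarrow iii)$ your operator $T_2=C\restr{\D(A^*)}$ is exactly the restriction $B$ used in the paper, with Lemma \ref{doug_var} applied to the pair $(\overline{A},B)$ and the dual sequence obtained as $t_n=U^*e_n$. The only differences are cosmetic (the order of the cycle and the names of the operators), so there is nothing to add.
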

\begin{proof} $i)\Rightarrow ii)$
	Let $f\in \D(A^*)$. Then
	$\|A^* f\|=\sup_{h\in \H,\|h\|=1}\left|\ip{A^*
			f}{h}\right|$ and by  the density of $\D(A)$ in $\H$ \begin{eqnarray*}\|A^* f\|&=&\sup_{h\in \D(A),\|h\|=1}\left|\ip{A^*
			f}{h}\right|=\sup_{h\in \D(A),\|h\|=1}|\ip{f}{Ah}|\\
		&=&\sup_{h\in \D(A),\|h\|=1}\left|\sum_{n=1}^\infty
		\overline{a_n(h)}\ip{f}{ g_n}\right|\\ &\leq&
		\sup_{h\in \D(A),\|h\|=1}\left(\sum_{n=1}^\infty
		|a_n(h)|^2\right)^{1/2}\left(\sum_{n=1}^\infty|\ip{f}{
			g_n}|^2\right)^{1/2}\\&\leq& \gamma_A\left(\sum_{n=1}^\infty|\ip{f}{
			g_n}|^2\right)^{1/2},\end{eqnarray*}
	taking into account that $\|a_h\|\leq \gamma_A\|h\|$ for some $\gamma_A>0$ and every $h\in \D(A)$. \\
	$ii)\Rightarrow iii)$ Let $\{e_n\}$ be an orthonormal basis of $\ell^2$. Consider the densely defined operator $B:\D(A^*)\to \ell^2$ given by $B f = \{\ip{f}{g_n}\}$ which is a restriction of the analysis operator $C:\D(C)\to \ell^2$.
	Since $C$ is closed, $B$ is closable. 
	
	We apply Lemma \ref{doug_var} to $T_1:=\overline{A}$ and $T_2:=B$ noting that $\|Bf\|^2=\sum_{n=1}^\infty
	|\ip{f}{g_n}|^2$. There exists $M\in \B(\H,\ell^2)$ such that $\overline{A}=B^*M$. 	This implies that for $h\in \D(A), u\in \D(A^*)=\D(B)$
	\begin{align*}
	\ip{A h}{u}&=\ip{B^* M h}{u}=\ip{M h}{B u }
	=\sum_{n=1}^\infty \ip{Mh}{e_n}\ip{g_n}{u}\\&=\sum_{n=1}^\infty \ip{h}{t_n}\ip{g_n}{u},
	\end{align*}
	taking $\{t_n\}=\{M^* e_n\}$ which is a Bessel sequence by \cite[Proposition 4.6]{classif}.	\\
	$iii)\Rightarrow i)$ It suffices to take $a_h=\{a_n(h)\}=\{\ip{h}{t_n}\}$ for all $h\in \D(A)$. Indeed for some $\gamma_A> 0$ we have $\sum_{n=1}^\infty |a_n(h)|^2= \sum_{n=1}^\infty|\ip{h}{t_n} |^2\leq \gamma_A \|h\|^2$ since $\{t_n\}$ is a Bessel sequence and $\ip{Ah}{u}=\sum_{n=1}^\infty a_n(h) \ip{g_n}{u}$, for $u\in \D(A^*)$.
\end{proof}

The term ``weak'' of weak $A$-frame and of weak atomic system, is due to the fact that \eqref{def_A_dual} holds whereas, in general, the same decomposition in strong sense $A h=\sum_{n=1}^\infty \langle h | t_n\rangle g_n$ may fail, unlike the case of $A$-frame where $A\in \B(\H)$, see \cite[Theorem 3]{gavruta}. We show this with the following example.
\beex
Suppose that $\H$ is separable. Let $\{e_n\}$ be an orthonormal basis for $\H$ and $\{g_n\}$ the sequence defined by $g_1=e_1$ and $g_n=n(e_n-e_{n-1})$ for $n\geq 2$. We denote by $C,D$ the analysis and synthesis operators of $\{g_n\}$, respectively. As it is shown in \cite{ole95}, $C$ is densely defined and $D$ is a proper restriction of $C^*$. In particular, $\left\{\frac{1}{n}\right\}_{n\in \N}\in \D(C^*)\backslash \D(D)$. Let $\mathcal{I}$ be the analysis operator of $\{e_n\}$. Obviously it is a bijection in $\B(\H,\ell^2)$. Now consider the sesquilinear form
$$
\Omega(f,u)=\sum_{n=1}^\infty \ip{f}{e_n}\ip{g_n}{u},
$$
which is defined on $\H\times \D(C)$. Moreover $\Omega (f,u)=\ip{\mathcal{I} f}{C u }$ for all $f\in \H,u\in \D(C)$. Therefore $\Omega (f,u)=\ip{C^*\mathcal{I} f}{u}$ for all $f\in \D(C^*\mathcal{I}),u\in \D(C)$.

This suggests to define $A:=C^*\mathcal{I}$ which is a densely defined closed operator. The adjoint $A^*$ is equal to $\mathcal{I}^*C$ and then it has $\D(C)$ as domain. Thus
$$
\ip{A f}{u}=\sum_{n=1}^\infty \ip{f}{e_n}\ip{g_n}{u}, \qquad \forall f\in \D(A),u\in \D(A^*),
$$
i.e. $\{g_n\}$ is a weak $A$-frame by Theorem \ref{th_char_weak_A_frame}. But the relation
$$
Af=\sum_{n=1}^\infty \ip{f}{e_n}g_n, \qquad\forall f\in \D(A)
$$
does not hold. Indeed, the element $f:=\sum_{n=1}^\infty\frac{1}{n}e_n$ belongs to $\D(A)$ and  the sum $\sum_{k=1}^n \ip{f}{e_k}g_k=e_n$ for $n\in \N$, does not converge in $\H$.
\enex

\beex\label{exm_A_duals}
In general, for a weak $A$-frame $\{g_n\}$ for $\H$ a Bessel weak $A$-dual $\{t_n\}$ is not unique. For all examples we have considered we give here a possible choice of $\{t_n\}$.
\begin{enumerate}
	\item[i)] If $\{g_n\}:=\{Ae_n\}$, where $\{e_n\}\subset\D(A)$ is an orthonormal basis for $\H$, then one can take $\{t_n\}=\{e_n\}$.
	\item[ii)] If $\{g_n\}:=\{Af_n\}$, where $\{f_n\}\subset\D(A)$ is a frame for $\H$, then one can take for $\{t_n\}$ any dual frame of $\{f_n\}$.
\end{enumerate}
\enex

		\berem \label{rem_adj_weak}
		Let $A$ be a densely defined operator, $\{g_n\}$  a weak $A$-frame and $\{t_n\}$  a Bessel weak $A$-dual of $\{g_n\}$, then for $ h\in\D(A)$ and $ u\in \D(A^*)$ \begin{equation*}
		\ip{A^* u}{h}=\ip{u}{Ah}=\sum_{n=1}^\infty \overline{\langle h | t_n\rangle \ip{g_n}{u}}= \sum_{n=1}^\infty \ip{u}{g_n} \langle t_n| h\rangle.
		\end{equation*}
		Since the sequence $\{t_n\}$ is Bessel, the series $\sum_{n=1}^\infty \ip{u}{g_n} t_n$ is convergent. Therefore
		$$
		\ip{A^* u}{h}=
		\left\langle {\sum_{n=1}^\infty \ip{u}{g_n} t_n}\Big{|}  {h}\right\rangle,\qquad \forall h\in \D(A), u\in \D(A^*)
		$$
		and by the density of $\D(A)$ we obtain
		\begin{equation}
		\label{A^*_dual}
		A^* u=\sum_{n=1}^\infty \ip{u}{g_n} t_n,\qquad \forall u\in \D(A^*).
		\end{equation}
		
		In conclusion, it is worth noting that in this setting, surprisingly,  from condition \eqref{weak_A-Frames} the {\em strong} decomposition of $A^*$ follows, whereas for $A$ we have just a {\em weak} decomposition, in general. If $A$ is symmetric, i.e. $A\subset A^*$, then clearly from \eqref{A^*_dual} we have a decomposition of $A$ in strong sense. If $\{g_n\}$ is also a Bessel sequence, then $A$ is bounded on its domain, thus closable, and  condition \eqref{weak_A-Frames} gives us decompositions in strong sense for both the closure $\overline{A}$ and $A^*$  (see  \cite[Theorem 3]{gavruta} and \cite[Lemma 2.2]{NA}).
		\enrem

\berem One could ask whether a weak $A$-dual $\{t_n\}$ of a weak $A$-frame $\{g_n\}$ is a weak $A^*$-frame, with $A$ a closable  densely defined operator. The answer is negative, in general. Indeed, if $\{t_n\}$ is a Bessel sequence, an inequality as
$$
\alpha\|Af\|^2\leq \sum_{n=1}^{\infty} |\ip{f}{t_n}|^2, \qquad \forall f\in \D(A)
$$
with $\alpha>0$, implies that $A$ is bounded on its domain. \enrem

Under further assumption of $A$, weak $A$-frames can be used to decompose the domain of $A^*$. 	
	\begin{thm}\label{thm: interchang unb}
			Let $A$ be a densely defined closed operator with $\mathcal{R}(A)=\H$ and $(A^\dag)^*\in\B(\H)$ the adjoint of the pseudo-inverse $A^\dag$ of $A$. Let $\{g_n\}$ be a weak $A$-frame and $\{t_n\} $ a Bessel weak $A$-dual of $\{g_n\}$. Then, the sequence $\{h_n\}$, with $h_n:=(A^\dag)^* t_n\in\H$ for every  $n\in\mathbb{N}$, is Bessel and \begin{equation*}
			u=\sum_{n=1}^\infty
			\ip{u}{g_n} h_n,\qquad u \in\mathcal{D}(A^*).\end{equation*}
	\end{thm}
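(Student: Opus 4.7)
The plan is to combine the strong decomposition of $A^*$ already derived in Remark~\ref{rem_adj_weak} with the identity $AA^\dag=I_\H$ supplied by Lemma~\ref{lem: pseudoinverse unb} under the hypothesis $\mathcal{R}(A)=\H$. The everywhere defined bounded operator $(A^\dag)^*$ will then act as a left inverse of $A^*$ on $\D(A^*)$, and, being bounded, it can be pushed inside the convergent series defining $A^*u$.

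First I would settle the Bessel property of $\{h_n\}$. If $\beta$ is a Bessel bound of $\{t_n\}$, then for every $v\in\H$,
\begin{equation*}
\sum_{n=1}^\infty |\ip{v}{h_n}|^2 = \sum_{n=1}^\infty |\ip{A^\dag v}{t_n}|^2 \leq \beta\,\|A^\dag\|^2\,\|v\|^2,
\end{equation*}
so $\{h_n\}$ is Bessel with bound $\beta\|A^\dag\|^2$.

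Next I would establish that $(A^\dag)^*A^*u=u$ for every $u\in\D(A^*)$. Since $\mathcal{R}(A)=\H$, Lemma~\ref{lem: pseudoinverse unb} gives $AA^\dag=I_\H$ on the whole of $\H$. Taking adjoints and using that, for $A$ densely defined closed and $A^\dag\in\B(\H)$ everywhere defined, the standard rule $(AA^\dag)^*\supseteq (A^\dag)^*A^*$ applies, one gets $(A^\dag)^*A^*\subseteq I^*=I$, which is exactly the required identity on $\D(A^*)$.

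Finally, I would substitute the strong expansion \eqref{A^*_dual} into this identity and commute $(A^\dag)^*\in\B(\H)$ with the convergent sum:
\begin{equation*}
u = (A^\dag)^*A^*u = (A^\dag)^*\sum_{n=1}^\infty \ip{u}{g_n}\,t_n = \sum_{n=1}^\infty \ip{u}{g_n}\,h_n, \qquad u\in\D(A^*),
\end{equation*}
which is the announced reconstruction formula. The only slightly delicate point is the adjoint-of-composition step yielding $(A^\dag)^*A^*\subseteq I$; however this is routine because $A^\dag$ is bounded and everywhere defined, so no deeper analytic difficulty arises, and nothing else in the argument requires justification beyond the boundedness of $(A^\dag)^*$.
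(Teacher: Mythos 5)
Your proof is correct. It reaches the same conclusion by a slightly different organization than the paper: the paper starts from the weak duality relation \eqref{def_A_dual}, substitutes $h=A^\dag f$ to get the weak identity $\ip{f}{u}=\sum_{n=1}^\infty\ip{f}{h_n}\ip{g_n}{u}$ for all $f\in\H$, proves $\{h_n\}$ is Bessel (by the same computation you use), and only then upgrades to the strong formula by noting that the series $\sum_n\ip{u}{g_n}h_n$ converges and testing against all $f\in\H$. You instead take the strong decomposition $A^*u=\sum_n\ip{u}{g_n}t_n$ of \eqref{A^*_dual} from Remark \ref{rem_adj_weak} as your starting point and compose with the bounded left inverse $(A^\dag)^*$ of $A^*$, obtained from $(A^\dag)^*A^*\subseteq (AA^\dag)^*=I$. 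Both arguments use the same ingredients (surjectivity giving $AA^\dag=I_\H$, Besselness of $\{t_n\}$, boundedness of $A^\dag$); yours is shorter because the ``weak-to-strong'' upgrade has already been done once in Remark \ref{rem_adj_weak} and you reuse it, whereas the paper's version redoes that step and is self-contained modulo the definition of a weak $A$-dual. The one point worth stating explicitly in your write-up is that $\D((A^\dag)^*A^*)=\D(A^*)$ because $(A^\dag)^*$ is everywhere defined, so the inclusion $(A^\dag)^*A^*\subseteq I$ really does yield $(A^\dag)^*A^*u=u$ for \emph{every} $u\in\D(A^*)$; alternatively the identity follows directly from $\ip{(A^\dag)^*A^*u}{f}=\ip{A^*u}{A^\dag f}=\ip{u}{AA^\dag f}=\ip{u}{f}$ for all $f\in\H$, which avoids the adjoint-of-composition rule altogether.
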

	\begin{proof}		
		First observe that, since $A$ is onto,  $f = AA^\dag f$, for every $f \in\H$. Let  $\{g_n\}$, $\{t_n\}$ and $\{h_n\}$ be as in the statement.  Then, by \eqref{def_A_dual},  we have that for $f\in\H,u\in \D(A^*)$
	\begin{align*}
	\ip{f}{u}=\ip{AA^\dag f}{u}=\sum_{n=1}^\infty \langle {A^\dag f | t_n\rangle} \ip{g_n}{u}=\sum_{n=1}^\infty \langle { f |h_n\rangle} \ip{g_n}{u}
	\end{align*}
	and for some $\gamma>0$
		\begin{eqnarray*}\sum_{n=1}^\infty |\ip{f}{h_n}|^2=
			\sum_{n=1}^\infty |\ip{A^\dag f}{t_n} |^2\leq\gamma\| A^\dag f\|^2\leq\gamma \|A^\dag \|^2\|f\|^2\end{eqnarray*} since $\{t_n\}$ is Bessel for $\H$ and $A^\dag$ is bounded.
		Hence, $\{h_n\}$ is a  Bessel sequence of $\H$.
	 Finally, for any $f\in\H$, $u\in\mathcal{D}(A^*)$, we have
				$\ip{u}{f} =\sum_{n=1}^\infty
		\ip{\ip{u}{g_n}h_n}{f}$. Since the sequence $\{h_n\}$ is Bessel, the series  $
		\sum_{n=1}^\infty
		\ip{u}{g_n} h_n$ is convergent and we conclude that
		$
		u=\sum_{n=1}^\infty
		\ip{u}{g_n} h_n,$ for all $ u \in\mathcal{D}(A^*). $
\end{proof}

Now we give another theorem of characterization for weak $A$-frames involving the synthesis operator.

\begin{thm}
	Let $A$ be a closed densely defined operator, $\{g_n\}\subset \H$ and $D:\D(D)\subset\ell^2 \to \H$ the synthesis operator of $\{g_n\}$.
	The following statements are equivalent.
	\begin{enumerate}
		\item[i)] The sequence $\{g_n\}$ is  a weak $A$-frame for $\H$;
		\item[ii)] there exists a densely defined, closed extension $R$ of $D$ such that $A=RQ$ with some $Q\in \B(\H,\ell^2)$;
		\item[iii)] there exists a closed densely defined operator $L:\D(L)\subset\ell^2\to\H$ such that and $\D(A^*)\subset\D(L^*)$, $g_n=L e'_n$ where $\{e'_n\}\subset\D(L)$ is an orthonormal basis for $\ell^2$ and $A=LU$ for some $U\in \B(\H,\ell^2)$.
	\end{enumerate}
\end{thm}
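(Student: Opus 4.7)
The plan is to prove the cycle (i)$\Rightarrow$(ii)$\Rightarrow$(iii)$\Rightarrow$(i), with Lemma \ref{doug_var} as the engine and some careful bookkeeping of adjoint domains for the remaining steps. For (i)$\Rightarrow$(ii), I apply Lemma \ref{doug_var} with $T_1 := A$, closed by hypothesis, and $T_2 := C\restr{\D(A^*)}$, the restriction of the analysis operator $C$ of $\{g_n\}$ to $\D(A^*)$. Since $A$ is closed and densely defined, $\D(A^*)$ is dense in $\H$, so $T_2$ is densely defined, and $\|T_2 f\|^2 = \sum_{n=1}^\infty|\ip{f}{g_n}|^2$; the weak $A$-frame inequality \eqref{weak_A-Frames} then reads exactly $\|A^*f\| \le \alpha^{-1/2}\|T_2 f\|$ on $\D(T_2) = \D(A^*) = \D(T_1^*)$, the Douglas-type hypothesis. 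The lemma yields $Q \in \B(\H,\ell^2)$ with $A = T_2^* Q$; setting $R := T_2^*$, I check that $R$ is closed (as an adjoint), densely defined (its domain contains the canonical basis of $\ell^2$, which sits in $\D(D)\subseteq \D(R)$), and extends $D$, since $T_2\subseteq C = D^*$ forces $R = T_2^*\supseteq D^{**}\supseteq D$.

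For (ii)$\Rightarrow$(iii), I take $L := R$, $U := Q$ and $\{e'_n\} := \{e_n\}$, the canonical orthonormal basis of $\ell^2$. Then $Le_n = Re_n = De_n = g_n$, and $A = LU$ is immediate. The condition $\D(A^*)\subseteq \D(L^*)$ comes from the specific $R$ produced above: because $T_2$ is closable as a restriction of the closed operator $C$, one has $L^* = R^* = T_2^{**} = \overline{T_2}\supseteq T_2$, so $\D(L^*)\supseteq \D(T_2) = \D(A^*)$. For (iii)$\Rightarrow$(i), pairing $L^* f$ with the orthonormal basis yields $\ip{L^* f}{e'_n} = \ip{f}{L e'_n} = \ip{f}{g_n}$, so $\|L^* f\|^2 = \sum_{n=1}^\infty|\ip{f}{g_n}|^2$ for every $f\in \D(L^*)$. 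Since $U$ is bounded with $\D(U) = \H$, the inclusion $U^* L^* \subseteq (LU)^* = A^*$ is automatic; thus for $f\in \D(L^*)$ one has $A^* f = U^* L^* f$ and $\|A^* f\|\le \|U\|\|L^* f\|$. The hypothesis $\D(A^*)\subseteq \D(L^*)$ then promotes the bound to all of $\D(A^*)$, yielding $\|U\|^{-2}\|A^* f\|^2 \le \sum_{n=1}^\infty|\ip{f}{g_n}|^2 < \infty$ for every $f\in \D(A^*)$, i.e., (i).

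The main subtlety I expect is the direction of the adjoint-domain inclusion. The factorization $A = LU$ with $U$ bounded everywhere gives for free $\D(L^*)\subseteq \D(A^*)$ through $U^* L^*\subseteq (LU)^*$, but the opposite inclusion $\D(A^*)\subseteq \D(L^*)$, which is what allows the weak $A$-frame bound to be transported from $\D(L^*)$ to all of $\D(A^*)$ in (iii)$\Rightarrow$(i), is not automatic. In the cycle it is supplied by the Douglas-type construction that produces $L = R = T_2^*$ with $\D(L^*) = \D(\overline{T_2})\supseteq \D(T_2) = \D(A^*)$; so the real content of the equivalence is that the existence of \emph{some} factorization as in (ii) or (iii) is in fact equivalent to the existence of the canonical factorization constructed from the weak $A$-frame bound itself.
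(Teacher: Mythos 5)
Your argument is correct and is essentially the paper's own proof: you invoke Lemma \ref{doug_var} with $T_1=A$ and $T_2=B:=C\restr{\D(A^*)}$ to obtain $A=B^*M$ and set $R:=B^*$, then pass to iii) with $L=R$, $U=M$ and the canonical basis, and close the cycle via the computation $\|L^*f\|^2=\sum_{n=1}^\infty|\ip{f}{g_n}|^2$ together with $\|A^*f\|\le\|U\|\,\|L^*f\|$ on $\D(A^*)\subseteq\D(L^*)$. Your explicit remark that the inclusion $\D(A^*)\subseteq\D(L^*)$ is furnished by the canonical operator $R=B^*$ rather than by an arbitrary factorization in ii) makes transparent a point the paper's ii)$\Rightarrow$iii) step leaves implicit, since it too silently reuses the operators $B^*$ and $M$ constructed in i)$\Rightarrow$ii).
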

\begin{proof}
	$i)\Rightarrow ii)$ Following the proof of Theorem \ref{th_char_weak_A_frame}, $A=B^*M$. Then the statement is proved taking $Q=M$ and $R=B^*$, since $B^* \supseteq C^*\supseteq D$.\\
	$ii)\Rightarrow iii)$ Since $R$ is an extension of the syntesis operator $D$,  it suffices to take $L=R,U=M$ and  $\{e'_n\}$ the canonical orthonormal basis of $\ell^2$.  \\
	$iii)\Rightarrow i)$
	For every $f\in\D(A^*)$ the adjoint of $L$ is given by
	$$L^* f=\sum_{n=1}^\infty\ip{f}{g_n}e'_n.$$ Indeed, for $ c\in\ell^2$
	\begin{eqnarray*}
		\ip{L^* f}{c } &=& \ip{L^* f}{\sum_{n=1}^\infty c_n e'_n } = \sum_{n=1}^\infty \overline{c_n}\ip{f}{ L e'_n}\\
		&=&  \sum_{n=1}^\infty\ip{ e'_n}{c }\ip{f}{ g_n}= \ip{\sum_{n=1}^\infty\ip{f}{g_n} e'_n}{c }.
	\end{eqnarray*}
Moreover, $\{g_n\}$ is a weak $A$-frame because for every $f\in \D(A^*)$ we have $\sum_{n=1}^\infty
|\ip{f}{g_n}|^2=\|L^* f\|^2<\infty$ and 
	$\|A^*f\|^2\leq\|U^*L^* f\|^2\leq\|U\|^2\|L^* f\|^2.
	$
\end{proof}

We conclude this section with some concrete examples.

\beex
\label{exm1}
Let us consider the differential operator $Af=-if'$ with domain $H^1(0,1)$ which is a densely defined closed operator on $\H=L^2(0,1)$, see \cite[Section 1.3]{Schm}.  The sequence $\{g_n\}_{n\in \Z}=\{e_{nb}\}_{n\in \Z}$, where  $0<b\leq1$ and $e_{nb}(x)=e^{2\pi i n b x}$ for $x\in (0,1)$, is a frame for $L^2(0,1)$, see \cite[Section 9.8]{ole}. Therefore $\{Ag_n\}=\{2\pi n b e_{bn}\}$ is a weak $A$-frame for $L^2(0,1)$ by Example \ref{ex: frame  to weak A frame}. The canonical dual frame of $\{e_{nb}\}$ is $\{\frac{1}{b}e_{nb}\}$, then according to Example \ref{exm_A_duals} we can take $\{\frac{1}{b}e_{nb}\}$ as weak $A$-dual of $\{g_n\}$.  The adjoint $A^*$ is the operator $A^*f=-if'$ with $\D(A^*)=H^1_0(0,1)$, see again \cite[Section 1.3]{Schm}. Note that $A^*\subset A$.
Hence the decomposition in weak sense of Theorem \ref{th_char_weak_A_frame} reads as
$$
\ip{-if'}{h}=\ip{A f}{h}=\sum_{n\in \Z} 2\pi n \ip{f}{e_{nb}}\ip{e_{nb}}{h}, \qquad \forall f\in H^1(0,1), h\in H^1_0(0,1).
$$
Finally, we have also a strong decomposition of $A^*$ by \eqref{A^*_dual}:
$$
-if'=A^* f=\sum_{n\in \Z} 2\pi n\ip{f}{e_{nb}}e_{nb}, \qquad \forall f\in H^1_0(0,1).
$$
\enex

\beex
\label{exm2}
Let $\H:=L^2(\mathbb{R})$ and denote by $A$ the selfadjoint operator $Af=-if'$ with domain $\D(A)=H^1(\mathbb{R})$.
Let $g:\mathbb{R}\to \mathbb{C}$ be a continuous and differentiable function with support $[0,L]$, more generally, one can take a function $g\in H^1(\mathbb{R})$ such that $g\in W$ where $W$ is the Wiener space, see e.g. \cite[Section 11.5]{ole} for the definition of $W$. 

Let $y\in \mathbb{R}$, $\omega\in \mathbb{R}$ and $T_y,M_\omega:\H\to \H$ be the {\em translation} and {\em modulation} operators defined, for $f\in \H$,  by $(T_yf)(x)=f(x-y)$ and $(M_\omega f)(x)=e^{2\pi i\omega x}f(x)$, respectively. Consider the Gabor system  $G(g,a,b)$. By the hypothesis, $\{g_{m,n}\}_{m,n\in\mathbb{Z}}\subseteq \D(A)$. Assume in particular that $\{g_{m,n}\}_{m,n\in\mathbb{Z}}$ is a frame for $L^2(\mathbb{R})$, a necessary and sufficient condition is given in \cite[Theorem 6.4.1]{groc}. Then, by Example \ref{ex: frame  to weak A frame}, $\{Ag_{m,n}\}_{m,n\in\mathbb{Z}}$ is a weak $A$-frame; i.e., for some $\gamma>0$
$$\gamma\|A^*f\|^2\leq \sum_{m,n\in \mathbb{Z}} |\ip{f}{Ag_{m,n}}|^2 <\infty \qquad \forall f\in \D(A^*)=\D(A)=H^1(\mathbb{R}).$$

Explicitly,
\begin{align*}
Ag_{m,n}(x)&=2\pi  b n e^{2\pi ib n x}g(x-a m)-ie^{2\pi ib n x}g'(x-a m)\\
&= 2\pi  b n (M_{b n}T_{a m}g)(x) -i(M_{bn}T_{a m}g')(x).
\end{align*}

For the decomposition of $A$ we can use the canonical dual of the Gabor frame $\{g_{m,n}\}_{m,n\in\mathbb{Z}}$ which is a Gabor frame $\{h_{m,n}\}_{m,n\in\mathbb{Z}}$ with some window $h\in L^2(0,1)$. Since $A$ is selfadjoint we can write directly a decomposition in strong sense of $A$ according to \eqref{A^*_dual}
$$
-if'=A f=\sum_{m,n\in \Z} \ip{f}{M_{bn}T_{a m}(2\pi b n g-ig')} M_{bn}T_{a m} h, \qquad \forall f\in H^1(\R).
$$

 Once more we  point out  that the property of being a weak $A$-frame does not depend on the ordering of the sequence $\{M_{bn}T_{a m}(2\pi b n g-ig')\}_{m,n\in\mathbb{Z}}$, see Remark \ref{rem: uncond conv weak A frame}.\enex

\beex
Let us consider the same space $\H:=L^2(\mathbb{R})$ and the operator $Af=f'$ with domain $\D(A)=H^1(\mathbb{R})$. Let $\phi\in H^1(\mathbb{R})$ and the {\em shift-invariant system} $\{\phi_k(x)\}_{k\in \Z}:=\{\phi(x-ck)\}_{k\in \Z}$, with $c>0$. Then $\{(A\phi_k)(x)\}_{k\in \Z}=\{\phi'(x-ck)\}_{k\in \Z}$. However, we cannot apply Example \ref{ex: frame  to weak A frame} to say that $\{A\phi_k\}$ is a weak $A$-frame. Indeed, as it is known \cite{ole}, $\{\phi_k\}$ is never a frame for $L^2(\mathbb{R})$.

Consider instead the {\em wavelet system} $\{\phi_{m,n}\}_{m,n\in \Z}:=\{a^{-\frac{m}{2}}\phi(a^{-m}x-nb)\}_{m,n\in \Z}$ with $a,b>0$. We have $\{\phi_{m,n}\}_{m,n\in\mathbb{Z}}\subset H^1(\mathbb{R})$ and $$\{(A\phi_{m,n})(x)\}_{m,n\in\mathbb{Z}}=\{a^{-\frac{3m}{2}}\phi'(a^{-m}x-nb)\}_{m,n\in\mathbb{Z}}.$$ The sequence we obtained is nothing but the  wavelet system $\{\phi'_{m,n}\}_{m,n\in\mathbb{Z}}$ generated by the derivative $\phi'$ multiplied by the scalars $\{a^{-m}\}_{m\in\mathbb{Z}}$. 

When $\{\phi_{m,n}\}_{m,n\in\mathbb{Z}}$ is a frame for $\H$, $\{A\phi_{m,n}\}_{m,n\in\mathbb{Z}}$ is a weak $A$-frame. In particular, by \cite[Theorem 10.6 (c)]{groc}, for any $k \in \N$, there exists a function $\phi$ with compact support  and continuous derivatives up to order $k$ such that $\{\phi_{m,n}\}_{m,n\in \Z}:=\{2^{-\frac{m}{2}}\phi(2^{-m}x-n)\}_{m,n\in \Z}$ is an orthonormal basis for $L^2(\mathbb{R})$ and hence  $\{A\phi_{m,n}\}_{m,n\in\mathbb{Z}}$  is a weak $A$-frame.
\enex

\beex
Let $A$ be a closed and densely defined on $\H$. The domain $\D(A)$ of $A$ can be turned into a Hilbert space if endowed with the graph norm $\|\cdot \|_A$. Denote it by $\H_A$ and by $\H_A^\times$  its conjugate dual and construct the rigged Hilbert space  $\H_A \hookrightarrow\H\hookrightarrow\H_A^\times$,  where $\hookrightarrow$ means that  the embeddings $\H_A \subset\H\subset\H_A^\times$ are continuous with dense range, see e.g. \cite[Chapter 10]{ait_book}. Since the sesquilinear form $B(\cdot,\cdot)$ that puts  $\H_A$ and $\H_A^\times$ in
duality is an extension of the inner product of $\H$ we write  $B(\xi,f)=\ip{\xi}{f}$  for the action of $\xi\in \H_A^\times$ on $f\in \H_A$. 

Now let $\{g_n\}\subset \H$. Then $\{g_n\}$ can be regarded as a sequence in $\H_A^\times$. Assume that it is a {\it Bessel-like sequence} in the sense of \cite[Definition 2.10]{beltp}, i.e. for every bounded
subset $\cM\subset\H_A$,
\begin{equation*} \sup_{f\in \cM}\sum_{n=1}^\infty|\ip{f}{g_n}|^2<\infty.\end{equation*} Then, by \cite[Proposition 2.11]{beltp}, $\sum_{n=1}^\infty |\ip{f}{g_n}|^2<\infty $ and the operator $F:\H_A \to \ell^2$ given by $Ff:=\{\ip{f}{g_n}\}$ is bounded. If $F$ is also injective, e.g. if $\{g_n\}$ is dense in $\H$, and has closed range, then $\{g_n\}$ is a weak $A^*$-frame since
$$
c\|Af\|^2\leq c\|f\|_A^2\leq\sum_{n=1}^\infty |\ip{f}{g_n}|^2 =\|Ff\|^2<\infty,\qquad \forall f\in \D(A)
$$
and for some $c>0$.
\enex

\section{Atomic systems for bounded operators \\ between different Hilbert spaces}\label{sec:A-frame}

In this section we will give another generalization of the notions and results in \cite{gavruta} to unbounded closed densely defined operators in a Hilbert space. { If $A:\D(A)\to\H$ is a closed and densely defined operator, then it can be seen as a bounded operator $A:\H_A\to \H$ between two different Hilbert spaces, where by $\H_A$ we indicate the Hilbert space $\D(A)[\|\cdot\|_A]$ with $\|\cdot\|_A$ the graph norm. 
	
Thus, before going forth, we reproduce the main definitions and results in \cite{gavruta} for bounded operators from a  Hilbert space $\J$ into another, say $\H$, omitting the proofs since they are very similar to the standard ones where $\J=\H$, \cite{gavruta,NA}. We will come back to the operator $A:\H_A\to \H$ in Section  \ref{subs: atomic system for A unbounded}.}
\\

Let $\ip{\cdot}{\cdot}_\H,\ip{\cdot}{\cdot}_\J$ be the inner products and $\|\cdot\|_\H,\|\cdot\|_\J$ the norms of $\H$ and $\J$, respectively. We denote by $\B(\J,\H)$ the set of bounded linear operators from $\J$ into $\H$.

\bedefi Let $K\in \B(\J,\H)$. An {\it atomic system for $K$} is a sequence $\{g_n\}\subset \H$ such that
\begin{itemize}
\item[(i)]  $\{g_n\}$ is a Bessel sequence,
\item[(ii)] there exists $\gamma>0$ such that for all $f\in\J$ there exists $a_f=\{a_n(f)\}\in\ell^2$, with
  $\|a_f\|\leq \gamma\|f\|_\J$ and $Kf=\sum_{n=1}^\infty a_n(f) g_n$.
\end{itemize}\findefi

Clearly the previous notion reduces to that of atomic system in \cite{gavruta} when $\J=\H$.

\beex\label{exist thm}Let $\H$ be separable and $K\in \B(\J,\H)$. 
Every frame $\{g_n\}$ for $\H$ is an atomic system for $K$. Indeed, if $\{v_n\}$ is a dual frame of  $\{g_n\}$, then 
\begin{equation*}
Kf=\sum_{n=1}^\infty \ip{Kf}{v_n}_\H g_n, \qquad\forall f\in \J
\end{equation*}
and the definition is satisfied by taking $a_f=\{\ip{Kf}{v_n}_\H\}$ for $f\in \J$.
\enex

\beex\label{ex: atomic syst} Let  $\J$ be separable, $K\in \B(\J,\H)$ and $\{f_n\}$ a frame for $\J$ with dual frame $\{h_n\}\subset\J$, then for all $f\in \J$
$$f=\sum_{n=1}^\infty \ip{f}{h_n}_\J f_n, \mbox{ hence } Kf=\sum_{n=1}^\infty \ip{f}{h_n}_\J Kf_n.$$
Thus the sequence $\{g_n\}=\{Kf_n\}$ is
 an atomic system for $K$, taking $a_f=\{a_n(f)\}:=\{\ip{f}{h_n}_\J\}$.

\enex

For $L\in \B(\J,\H)$  we denote by $L^*\in \B(\H,\J)$ its adjoint.
We now give a characterization of the atomic
systems for operators in $\B(\J,\H)$ similar to that obtained by  G\u{a}vru\c{t}a in
\cite[Theorem 3]{gavruta}.

\begin{thm}
  \label{th_char_K_frame}
  Let $\{g_n\}\subset\H$ and  $K\in \B(\J,\H)$. Then the following are equivalent.
  \begin{itemize}
    \item[i)] $\{g_n\}$ is an atomic system for $K$;
    \item[ii)] there exist $\alpha, \beta>0$  such that for every $f\in \H$
    \begin{equation}\label{double ineq}
     \alpha \|K^* f\|_\J^2\leq\sum_{n=1}^\infty
|\ip{f}{g_n}_\H|^2\leq \beta \|f\|_\H^2;
    \end{equation}
    \item[iii)] $\{g_n\}$ is a Bessel sequence of $\H$ and there exists a  Bessel sequence $\{k_n\}$ of $\J$  such that
        \begin{equation}\label{eq: two Bess seq}
          Kf=\sum_{n=1}^\infty \langle f | k_n\rangle_\J\, g_n,\qquad \forall f\in \J.
        \end{equation}
  \end{itemize}
\end{thm}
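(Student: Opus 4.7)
The plan is to establish the equivalences cyclically as $(iii) \Rightarrow (i) \Rightarrow (ii) \Rightarrow (iii)$, following the pattern of Theorem \ref{th_gavruta} but carrying two distinct inner products $\ip{\cdot}{\cdot}_\J$ and $\ip{\cdot}{\cdot}_\H$ throughout the argument. The implication $(iii) \Rightarrow (i)$ is immediate: setting $a_n(f) := \ip{f}{k_n}_\J$ for $f \in \J$, the Bessel property of $\{k_n\}$ in $\J$ gives $\|a_f\|_2 \leq \gamma\|f\|_\J$, and \eqref{eq: two Bess seq} is the desired atomic expansion.

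For $(i) \Rightarrow (ii)$, the upper inequality in \eqref{double ineq} is just the Bessel condition on $\{g_n\}$. For the lower one I would use the identity $\|K^*f\|_\J = \sup_{h \in \J, \|h\|_\J = 1} |\ip{K^*f}{h}_\J| = \sup |\ip{f}{Kh}_\H|$, expand $Kh = \sum_n a_n(h) g_n$ via the atomic decomposition, and then apply Cauchy--Schwarz to obtain
$$
\|K^*f\|_\J \leq \sup_{\|h\|_\J = 1} \|a_h\|_2 \left(\sum_{n=1}^\infty |\ip{f}{g_n}_\H|^2\right)^{1/2} \leq \gamma \left(\sum_{n=1}^\infty |\ip{f}{g_n}_\H|^2\right)^{1/2},
$$
which yields the lower bound with $\alpha = \gamma^{-2}$.

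The substantive step is $(ii) \Rightarrow (iii)$. The upper inequality in \eqref{double ineq} makes the analysis operator $C \colon \H \to \ell^2$ of $\{g_n\}$, $Cf = \{\ip{f}{g_n}_\H\}$, an element of $\B(\H, \ell^2)$ with $\|Cf\|_2^2 = \sum_n |\ip{f}{g_n}_\H|^2$. The lower inequality then reads $\|K^*f\|_\J^2 \leq \alpha^{-1} \|Cf\|_2^2$ for every $f \in \H$, equivalently $KK^* \leq \alpha^{-1} C^*C$ as positive operators on $\H$. By Douglas' factorization theorem applied to $K \colon \J \to \H$ and $C^* \colon \ell^2 \to \H$, this range inclusion produces $V \in \B(\J, \ell^2)$ with $K = C^*V$. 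Setting $k_n := V^* e_n$, where $\{e_n\}$ is the canonical orthonormal basis of $\ell^2$, one has $\ip{f}{k_n}_\J = \ip{Vf}{e_n}_{\ell^2}$, and therefore
$$
Kf = C^*Vf = \sum_{n=1}^\infty \ip{f}{k_n}_\J\, g_n, \qquad f \in \J.
$$
The Bessel property of $\{k_n\}$ in $\J$ follows from the boundedness of $V$ via $\sum_n |\ip{f}{k_n}_\J|^2 = \|Vf\|_2^2 \leq \|V\|^2 \|f\|_\J^2$. The main obstacle here is simply the careful bookkeeping needed to invoke Douglas' theorem in the right form for operators between different Hilbert spaces; beyond that, no new idea is required over the classical argument of \cite[Theorem 3]{gavruta}.
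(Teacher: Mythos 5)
Your proof is correct, and it is essentially the argument the paper has in mind: Theorem \ref{th_char_K_frame} is stated with the proof omitted precisely because it follows the classical scheme of \cite[Theorem 3]{gavruta}, which is what you reproduce, with the two inner products tracked correctly and the Douglas factorization invoked in its standard form for operators between different Hilbert spaces (the same device the paper packages as Lemma \ref{doug_var} for the unbounded case). Nothing further is needed.
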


\bedefi \label{def_K_frame} Let $K\in \B(\J, \H)$. A sequence $\{g_n\}\subset\H$ is called a \emph{$K$-frame for $\H$} if the chain of inequalities \eqref{double ineq} holds true for all $f\in\H$ and some $\alpha, \beta>0$.  \findefi

By \eqref{eq: two Bess seq} the range $\mathcal{R}(K)$ must be a separable subspace of $\H$, which may be non separable.
As in \cite[Definition 2.1]{NA} a sequence $\{k_n\}\subset \J$ as in \eqref{eq: two Bess seq} is called a {\it $K$-dual} of the $K$-frame $\{g_n\}\subset\H$.

\beex As in Section \ref{sec:weak A-frame}, we remark that, in general, a $K$-dual $\{k_n\}\subset\J$ of a $K$-frame $\{g_n\}\subset\H$  is not unique. Then, for the $K$-frames $\{g_n\}$ considered in Examples \ref{exist thm} and \ref{ex: atomic syst} we give possible $K$-duals.
\begin{enumerate}
	\item[i)] If $\{g_n\}:=\{f_n\}$, with $\{f_n\}\subset\H$ a frame for $\H$, then one can take $\{k_n\}=\{K^* v_n\}$ where $\{v_n\}$ is any dual frame of $\{f_n\}$.
	\item[ii)] If $\{g_n\}:=\{Kf'_n\}$, with $\{f'_n\}\subset\J$ a frame for $\J$, then one can take for $\{k_n\}$ any dual frame of $\{f'_n\}$.
\end{enumerate}
\enex

Once at hand a $K$-frame $\{g_n\}$, the Bessel sequence  $\{k_n\}\subset\J$ in Theorem \ref{th_char_K_frame} is a $K^*$-frame, see \cite[Lemma 2.2]{NA} for the case $\J=\H$.

We now give a characterization of $K$-frames involving the synthesis operator. The equivalence of the first two sentences is an easy generalization of \cite[Theorem 4]{gavruta} and the other ones are straightforward.

\begin{thm}	\label{char_K_frame}
	Let $K\in \B(\J,\H)$, $\{g_n\}\subset \H$ and $D:\D(D)\subseteq \ell^2\to\H$ the synthesis operator of $\{g_n\}$. The following statements are equivalent.
	\begin{enumerate}
		\item[i)] $\{g_n\}$ is a $K$-frame for $\H$;
		\item[ii)] there exists $L\in \B(\ell^2,\H)$ such that $g_n=L e'_n$ where $\{e'_n\}$ is an orthonormal basis for $\ell^2$ and  $\mathcal{R}(K)\subset\mathcal{R}(L)$;
		\item[iii)] $D\in \B(\ell^2,\H)$ and $\mathcal{R}(K)\subset\mathcal{R}(D)$;
		\item[iv)] $D\in \B(\ell^2,\H)$ and there exists $M\in \B(\J,\ell^2)$ such that $K=DM$.
	\end{enumerate}
\end{thm}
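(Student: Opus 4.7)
The proof plan hinges on Douglas' factorization theorem: for bounded operators $K\in\B(\J,\H)$ and $D\in\B(\ell^2,\H)$, the inclusion $\mathcal{R}(K)\subset\mathcal{R}(D)$ is equivalent to the existence of $M\in\B(\J,\ell^2)$ with $K=DM$, which in turn is equivalent to the norm estimate $\|K^*f\|_\J\leq\mu\|D^*f\|$ for some $\mu>0$ and all $f\in\H$. I will use throughout Proposition \ref{prop: classif}, which identifies $D^*$ with the analysis operator $C$ so that $\|D^*f\|^2=\sum_{n=1}^\infty|\langle f,g_n\rangle_\H|^2$, together with the fact that $\{g_n\}$ is Bessel precisely when $D$ is defined and bounded on the whole of $\ell^2$.

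The equivalence (iii) $\Leftrightarrow$ (iv) is then an immediate instance of the first two clauses of Douglas' theorem. For (ii) $\Leftrightarrow$ (iii), the direction (iii) $\Rightarrow$ (ii) is obtained by choosing $L=D$ and $\{e'_n\}$ the canonical orthonormal basis of $\ell^2$; conversely, given $L\in\B(\ell^2,\H)$ with $g_n=Le'_n$, the identity $Lc=\sum_{n=1}^\infty c_n Le'_n=\sum_{n=1}^\infty c_n g_n$, valid for every $c=\{c_n\}\in\ell^2$ by boundedness of $L$ combined with $c=\sum_{n=1}^\infty c_n e'_n$ in $\ell^2$, shows that $D$ is defined on all of $\ell^2$ and coincides with $L$; hence $D\in\B(\ell^2,\H)$ and $\mathcal{R}(D)=\mathcal{R}(L)\supset\mathcal{R}(K)$.

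For (i) $\Leftrightarrow$ (iv): if $\{g_n\}$ is a $K$-frame, the upper bound in \eqref{double ineq} forces $\{g_n\}$ to be Bessel, so $D\in\B(\ell^2,\H)$, and Theorem \ref{th_char_K_frame} provides a Bessel sequence $\{k_n\}\subset\J$ such that $Kf=\sum_{n=1}^\infty\langle f,k_n\rangle_\J\,g_n=D(\{\langle f,k_n\rangle_\J\})$; defining $M:\J\to\ell^2$ by $Mf=\{\langle f,k_n\rangle_\J\}$ yields $M\in\B(\J,\ell^2)$ with $K=DM$. Conversely, starting from (iv) one has $\|K^*f\|_\J=\|M^*D^*f\|_\J\leq\|M\|\,\|D^*f\|$, which is exactly the lower bound in \eqref{double ineq}, while the upper bound follows from the boundedness of $D$ (equivalently of $C$).

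No step looks like a serious obstacle: the whole argument is bookkeeping around Douglas' factorization theorem and the identification $D^*=C$. The only mild point of care is in (ii) $\Rightarrow$ (iii), where one must observe that boundedness of $L$ already forces $D=L$ on the whole of $\ell^2$, and not merely that $D$ admits a bounded extension, so that $\mathcal{R}(D)$ itself contains $\mathcal{R}(K)$.
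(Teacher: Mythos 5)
Your proof is correct and follows exactly the route the paper intends: the paper omits the argument entirely, remarking only that i) $\Leftrightarrow$ ii) is an easy generalization of G\u{a}vru\c{t}a's Theorem 4 and that the remaining equivalences are straightforward, and that generalization is precisely the Douglas-factorization argument combined with the identification $D^*=C$ (via Proposition \ref{prop: classif}) that you carry out. The only slip is in ii) $\Rightarrow$ iii): since $\{e'_n\}$ is an arbitrary orthonormal basis of $\ell^2$, the coordinates of $c$ with respect to $\{e'_n\}$ need not be its canonical coordinates, so what you actually obtain is $D=LV$ with $V$ the unitary sending the canonical basis to $\{e'_n\}$, rather than $D=L$; the conclusion is unaffected because $\mathcal{R}(LV)=\mathcal{R}(L)$.
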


From Theorem \ref{char_K_frame} $iii)$ it follows that a $K$-frame is not necessarily a frame sequence, indeed the range of the synthesis operator may be not closed, see \cite[Corollary 5.5.2]{ole}.

\subsection{Atomic systems for unbounded operators $A$ and $A$-frames}\label{subs: atomic system for A unbounded} As announced at the beginning of this section, we come back to our original aim to generalize $K$-frames, with $K\in\B(\H)$, in the context of unbounded closed and densely defined operator $A$ on a Hilbert space $\H$. Here, for simplicity, we denote again by $\ip{\cdot}{\cdot}$ and $\|\cdot\|$ the inner product and the norm of $\H$, respectively. \\

From now on we will consider $A$ as a bounded operator in $\B(\H_A,\H)$, where $\H_A$ is the Hilbert space obtained endowing the domain $\D(A)$ with the graph norm $\|\cdot\|_A$, induced by the graph inner product $\ip{\cdot}{\cdot}_A$. Let $A^\sharp:\H\to \H_A$ be the adjoint of  $A:\H_A\to\H$, different from $A^*$ the adjoint of  the unbounded operator $A$.\\

For the reader's convenience we rewrite the definitions of atomic system for $A\in\B(\H_A,\H)$ and of $A$-frame. A sequence $\{g_n\}\subset\H$ is said to be

   \begin{enumerate}
   	\item[i)] an {\it atomic system} for $A$ if $\{g_n\}$ is a Bessel sequence and there exists $\gamma>0$ such that for all $f\in\D(A)$ there exists $a_f=\{a_n(f)\}\in\ell^2$, with
   	$\|a_f\|\leq \gamma\|f\|_A$ and $Af=\sum_{n=1}^\infty a_n(f) g_n$, with respect to the norm of $\H$;
   	\item[ii)] an {\it $A$-frame} if there exist $\alpha, \beta>0$  such that for every $f\in \H$
   	\begin{equation*}
   	\alpha \|A^\sharp f\|_A^2\leq\sum_{n=1}^\infty
   	|\ip{f}{g_n}|^2\leq \beta \|f\|^2.
   	\end{equation*}
     \end{enumerate}
Hence, Theorem \ref{char_K_frame} can be rewritten as follows.

\begin{cor}
	\label{cor_A-frame_final}
	Let $\{g_n\}\subset\H$ and  $A$  a closed densely defined operator on $\H$. Then the following are equivalent.
	\begin{itemize}
		\item[i)] $\{g_n\}$ is an atomic system for $A$;
		\item[ii)] $\{g_n\}$ is an $A$-frame;
		\item[iii)] $\{g_n\}$ is a Bessel sequence of $\H$ and there exists a Bessel sequence $\{k_n\}$ of $\H_A$  such that
		\begin{equation}\label{exp_A}
		Af=\sum_{n=1}^\infty \langle f | k_n\rangle_A \; g_n,\qquad \forall f\in \D(A)
		\end{equation} with respect to the norm of $\H$.
		\item[iv)] the synthesis operator $D$ of $\{g_n\}$ is bounded and everywhere defined on $\ell^2$ and $\mathcal{R}(A)\subset\mathcal{R}(D)$;
		\item[v)] the synthesis operator $D$ of $\{g_n\}$ is bounded and everywhere defined on $\ell^2$ and there exists $M\in \B(\H_A,\ell^2)$ such that $A=DM$.
	\end{itemize}
\end{cor}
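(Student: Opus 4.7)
The corollary is essentially a specialization of Theorem \ref{th_char_K_frame} and Theorem \ref{char_K_frame} to the setting $\J=\H_A$ and $K=A$. The first thing to verify is therefore that, with $A$ viewed as an operator from $\H_A$ into $\H$, we genuinely have $A\in\B(\H_A,\H)$: this is immediate from the definition of the graph norm, since
\[
\|Af\|^2\leq\|Af\|^2+\|f\|^2=\|f\|_A^2,\qquad\forall f\in\D(A),
\]
so $A$ is bounded with norm at most one on $\H_A$. The Hilbert adjoint $A^\sharp\in\B(\H,\H_A)$ is then well-defined by $\ip{Af}{g}=\ip{f}{A^\sharp g}_A$ for $f\in\H_A$, $g\in\H$, and this is exactly the $A^\sharp$ appearing in the $A$-frame inequality of the corollary.

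Once the identification is in place, the equivalence $i)\Leftrightarrow ii)\Leftrightarrow iii)$ follows directly from Theorem \ref{th_char_K_frame} with $\J=\H_A$, $K=A$: the bound $\|a_f\|\leq\gamma\|f\|_A$ in the atomic system definition matches condition $ii)$ of that theorem because the norm on $\J$ is now $\|\cdot\|_A$; and the reconstruction in item $iii)$ of the corollary is literally \eqref{eq: two Bess seq} with $\J=\H_A$, so that $\ip{\cdot}{\cdot}_\J=\ip{\cdot}{\cdot}_A$ and the dual sequence $\{k_n\}$ lives in $\H_A$.

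The equivalences $ii)\Leftrightarrow iv)\Leftrightarrow v)$ are analogously obtained from Theorem \ref{char_K_frame}. Item $iv)$ corresponds to items $ii)$--$iii)$ of that theorem (the synthesis operator $D$ of the Bessel sequence $\{g_n\}$ is bounded on $\ell^2$, and the range inclusion becomes $\mathcal{R}(A)\subset\mathcal{R}(D)$), while item $v)$ is the factorization $A=DM$ with $M\in\B(\H_A,\ell^2)$, i.e.\ item $iv)$ of Theorem \ref{char_K_frame} transcribed with $\J=\H_A$.

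I do not expect any genuine obstacle: the corollary is a bookkeeping statement, and the only subtlety to watch for is the careful distinction between $A^\sharp$ (the adjoint of the bounded operator $A:\H_A\to\H$, taking values in $\H_A$) and the unbounded Hilbert-space adjoint $A^*$ studied in Section \ref{sec:weak A-frame}; the inequality in $ii)$ must be stated with $A^\sharp$ and the $\H_A$-norm on the left, which is what makes the whole transcription consistent.
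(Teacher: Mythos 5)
Your proposal is correct and follows exactly the route the paper intends: the paper gives no separate proof of Corollary \ref{cor_A-frame_final}, presenting it as the specialization of Theorems \ref{th_char_K_frame} and \ref{char_K_frame} to $\J=\H_A$ and $K=A$, which is precisely your argument (including the observation that $\|Af\|\leq\|f\|_A$ makes $A\in\B(\H_A,\H)$ and that $A^\sharp$ is the adjoint of this bounded operator, not $A^*$). The only detail worth adding is that the closedness of $A$ is what makes $\H_A=\D(A)[\|\cdot\|_A]$ complete, so that the cited theorems for a pair of Hilbert spaces actually apply.
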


Note also that if $A\in \B(\H)$, then the graph norm of $A$ is defined on $\H$ and it is equivalent to $\|\cdot\|$, thus our notion  reduces to that of \cite{gavruta}.

\berem
The expansion in \eqref{exp_A} of $Af$ in terms of $\{g_n\}$ involves the inner product $\ip{\cdot}{\cdot}_A$. One might ask if there exists also a sequence $\{t_n\}\subset \H$ such that
$$
Af=\sum_{n=1}^\infty \ip{f}{t_n}g_n,\qquad \forall f\in \D(A)
$$
like for atomic systems for  $A\in \B(\H)$, see \cite[Theorem 3]{gavruta}. The answer, in general, is negative if $A$ is unbounded.
Indeed, let $\{e_n\}$ be an orthonormal basis for a separable Hilbert space $\H$ and $A$ an unbounded closed and densely defined operator in $\H$. Assume in particular  that  $\{e_n\}\nsubseteq \D(A^*)$, such an orthonormal basis for $\H$ can always  be  found. Clearly, $\{e_n\}$ is an $A$-frame.  Suppose that there exists a sequence $\{t_n\}\subset \H$ such that
$Af=\sum_{n=1}^\infty \ip{f}{t_n}e_n$, for all $f\in \D(A).$
Then $\ip{Af}{e_n}=\ip{f}{t_n}$ for all $f\in \D(A)$ and $n\in \mathbb{N}$. But this leads to the contradiction that $\{e_n\}\subset \D(A^*)$.
\enrem

We conclude by showing an example of an $A$-frame which is not a frame.
\beex
\label{exm_not_frame}
Let $\H=L^2(\R)$, $\{\alpha_k\}_{k\in \Z}$ be a complex sequence and $A$ the closed and densely defined operator on $L^2(\R)$ defined as
$$
(Af)(x)=\begin{cases}
\alpha_{k}f(x) \qquad\qquad\!\! x\in [2k,2k+1[\\
\alpha_{k}f(x-1) \qquad x\in [2k+1,2k+2[
\end{cases}
$$
 where $k$ varies in $\Z$, with natural domain
$$\D(A)=\left \{f\in L^2(\R): \sum_{k\in \Z} |\alpha_k|^2\int_{2k}^{2k+1} |f(x)|^2dx<\infty \right\}.$$  The operator $A\in \B(L^2(\R))$ if and only if $\{\alpha_k\}_{k\in \Z}$ is bounded.

Now let $g\in L^2(\R)$ be bounded with support $[0,2]$ and let the essential infimum of $|g|$ on $[0,2]$ be positive,  $\displaystyle \text{essinf}_{x\in [0,2]} |g(x)|>0$. Consider the Gabor system $\mathcal{G}(g,a,b):=\mathcal{G}(g,2,1)=\{e^{2\pi i m x}g(x-2n)\}_{m,n\in \Z}$; it is Bessel because $g$ is bounded and compactly supported, but it is not a frame since $ab=2>1$. However, we show that it is an $A$-frame. Indeed, the range of the synthesis operator of $\mathcal{G}(g,1,2)$ is
$$
\mathcal{R}(D)=\{f\in L^2(\R): f(x)=f(x-1), \forall x\in [2k+1,2k+2[, \forall k\in\mathbb{Z}\}
$$
and contains $\mathcal{R}(A)$. Therefore, by Corollary \ref{cor_A-frame_final}, $\mathcal{G}(g,2,1)$ is an $A$-frame.
\enex

\section{Conclusions}
{ In conclusion, we make some remarks to highlight the novelty and potential applications of the notion of weak $A$-frame. If $\{f_n\}\subset\H$ is a frame for $\H$ and $\{h_n\}\subset\H$ is a dual frame of $\{f_n\}$, then a closable densely defined operator $A$ in $\H$ can be decomposed as follows: $$Af=\sum_{n=1}^\infty \ip{Af}{h_n}f_n,\qquad\forall f\in \D(A).$$
	
However, in this decomposition the action of the operator $A$ still appears. On the contrary, if $\{g_n\}\subset\H$ is a weak $A$-frame, then by Theorem \ref{th_char_weak_A_frame} there exists a Bessel sequence $\{t_n\}\subset\H$ such that
	\begin{equation*}
	\ip{A h}{u}=\sum_{n=1}^\infty \langle h | t_n\rangle \ip{g_n }{u},\qquad \forall h\in \D(A), u\in \D(A^*)
	\end{equation*}
and the action of the operator $A$ does not appear in the decomposition.
Since we have also
	\begin{equation*}
	A^* u=\sum_{n=1}^\infty \ip{u}{g_n} t_n,\qquad \forall u\in \D(A^*)
	\end{equation*}
weak $A$-frames are clearly connected to multipliers that have been recently object of many studies, refer e.g. to the survey \cite{Balazs_surv}. However, few works were directed to unbounded multipliers, so our study could give a contribution in this direction, actually it is what we did in Examples \ref{exm1} and \ref{exm2} for some specific operators. 

We want to mention 	\cite{BagBell,Bag_Riesz,Bag_sesq,inoue} where some unbounded multipliers have been defined as model of non-selfadjoint Hamiltonians. Let us focus on \cite{Bag_Riesz} for a connection with weak $A$-frames. Fixed a complex sequence $\alpha=\{\alpha_n\}$ and a Riesz basis $\phi=\{\phi_n\}$ with dual $\psi=\{\psi_n\}$, one can construct the operator
\begin{equation}
\label{Hamiltonian}
H^{\alpha}_{\phi,\psi}f=\sum_{n=1}^\infty \alpha_n \ip{f}{\psi_n}\phi_n
\end{equation}
with $\D(H^{\alpha}_{\phi,\psi})$ being the greatest subspace where \eqref{Hamiltonian} converges. Then $\{\alpha_n\phi_n\}$ is a weak $H^{\alpha}_{\phi,\psi}$-frame, indeed by \cite[Proposition 2.1]{Bag_Riesz} 
$$\D({H^{\alpha}_{\phi,\psi}}^*)=\left \{f\in \H: \sum_{n=1}^\infty  |\ip{f}{\alpha_n\phi_n}|^2<\infty\right\}$$  and thus Theorem \ref{th_char_weak_A_frame} iii) is satisfied.
}

\section*{Acknowledgements}
The authors warmly thank Prof. C. Trapani and the referees for their  fruitful comments and remarks.  This work has been supported by the
Gruppo Nazionale per l'Analisi Matematica, la Probabilit\`{a} e le
loro Applicazioni (GNAMPA) of the Istituto Nazionale di Alta
Matematica (INdAM).

\vspace*{0.15cm}

\bibliographystyle{amsplain}

\end{document}